\documentclass[12pt]{amsart}
\usepackage{amscd,amsmath,amsthm,amssymb}
\usepackage{pstcol,pst-plot,pst-3d}
\usepackage{color}
\usepackage{pstricks}
\usepackage{stmaryrd}
\usepackage{tikz}
\usepackage{tikz-cd}

\newpsstyle{fatline}{linewidth=1.5pt}
\newpsstyle{fyp}{fillstyle=solid,fillcolor=verylight}
\definecolor{verylight}{gray}{0.97}
\definecolor{light}{gray}{0.9}
\definecolor{medium}{gray}{0.85}
\definecolor{dark}{gray}{0.6}

 %
 %
 %
 \def\NZQ{\mathbb}               

 \def\ZZ{{\NZQ Z}}

 %
 %
 \def\frk{\mathfrak}               

 \def\mm{{\frk m}}
 
 \def\nn{{\frk n}}
 %

 \def\G{{\mathcal G}}

\def\Oc{{\mathcal O}}
\def\Tc{{\mathcal T}}
 \def\Cc{{\mathcal C}}

 \def\opn#1#2{\def#1{\operatorname{#2}}} 
 %
 \opn\chara{char} \opn\length{\ell} \opn\pd{pd} \opn\rk{rk}
 \opn\projdim{proj\,dim} \opn\injdim{inj\,dim} \opn\rank{rank}
 \opn\depth{depth} \opn\grade{grade} \opn\height{height}
 \opn\embdim{emb\,dim} \opn\codim{codim}
 
 \opn\Tr{Tr} \opn\bigrank{big\,rank}
 \opn\superheight{superheight}\opn\lcm{lcm}
 \opn\trdeg{tr\,deg}
 \opn\reg{reg} \opn\lreg{lreg} \opn\ini{in} \opn\lpd{lpd}
 \opn\size{size} \opn\sdepth{sdepth}
 \opn\link{link}\opn\fdepth{fdepth}\opn\lex{lex}
 \opn\tr{tr}
 \opn\type{type}
 \opn\gap{gap}
 \opn\diam{diam}
 \opn\Mod{Mod}
 %
 \opn\div{div} \opn\Div{Div} \opn\cl{cl} \opn\Cl{Cl}
 %
 %
 \opn\Spec{Spec} \opn\Supp{Supp} \opn\supp{supp} \opn\Sing{Sing}
 \opn\Ass{Ass} \opn\Min{Min}\opn\Mon{Mon}
 %
 %
 \opn\Ann{Ann} \opn\Rad{Rad} \opn\Soc{Soc}
 %
 %
 \opn\Im{Im} \opn\Ker{Ker} \opn\Coker{Coker} \opn\Am{Am}
 \opn\Hom{Hom} \opn\Tor{Tor} \opn\Ext{Ext} \opn\End{End}
 \opn\Aut{Aut} \opn\id{id}
 
 \opn\nat{nat}
 \opn\pff{pf}
 \opn\Pf{Pf} \opn\GL{GL} \opn\SL{SL} \opn\mod{mod} \opn\ord{ord}
 \opn\Gin{Gin} \opn\Hilb{Hilb}\opn\sort{sort}
 \opn\PF{PF}\opn\Ap{Ap}
 \opn\dist{dist}
 %
 %
 \opn\aff{aff}
 \opn\relint{relint} \opn\st{st}
 \opn\lk{lk} \opn\cn{cn} \opn\core{core} \opn\vol{vol}  \opn\inp{inp} \opn\nilpot{nilpot}
 \opn\link{link} \opn\star{star}\opn\lex{lex}\opn\set{set}
 \opn\width{wd}
 \opn\Fr{F}
 \opn\QF{QF}
 \opn\G{G}
 \opn\type{type}\opn\res{res}
 \opn\conv{conv}
 \opn\subtr{subtr}
 \opn\gr{gr}
 
 %
 %
 
 \def\pot#1#2{#1[\kern-0.28ex[#2]\kern-0.28ex]}

 %
 %
 \opn\dirlim{\underrightarrow{\lim}}
 \opn\inivlim{\underleftarrow{\lim}}
 %
 %
 %
 
 \let\sect=\cap
 \let\dirsum=\oplus
 
 \let\iso=\cong
 
 \let\Sect=\bigcap

 %
 %
 \let\to=\rightarrow
 
 \def\Implies{\ifmmode\Longrightarrow \else
         \unskip${}\Longrightarrow{}$\ignorespaces\fi}
 \def\implies{\ifmmode\Rightarrow \else
         \unskip${}\Rightarrow{}$\ignorespaces\fi}
 \def\iff{\ifmmode\Longleftrightarrow \else
         \unskip${}\Longleftrightarrow{}$\ignorespaces\fi}

 \let\:=\colon
 \newtheorem{Theorem}{Theorem}[section]
 \newtheorem{Lemma}[Theorem]{Lemma}
 \newtheorem{Corollary}[Theorem]{Corollary}
 \newtheorem{Proposition}[Theorem]{Proposition}

 \newtheorem{Example}[Theorem]{Example}
 \newtheorem{Examples}[Theorem]{Examples}

 \newtheorem{Question}[Theorem]{Question}
 
 %
 %
 \let\epsilon\varepsilon
 \let\kappa=\varkappa
 %
 %
 \textwidth=15cm \textheight=22cm \topmargin=0.5cm
 \oddsidemargin=0.5cm \evensidemargin=0.5cm \pagestyle{plain}
 %
 %
 \def\qed{\ifhmode\textqed\fi
       \ifmmode\ifinner\quad\qedsymbol\else\dispqed\fi\fi}
 \def\textqed{\unskip\nobreak\penalty50
        \hskip2em\hbox{}\nobreak\hfil\qedsymbol
        \parfillskip=0pt \finalhyphendemerits=0}
 \def\dispqed{\rlap{\qquad\qedsymbol}}
 
 %
 \opn\dis{dis}
 \def\pnt{{\raise0.5mm\hbox{\large\bf.}}}
 
 \opn\Lex{Lex}

 

 \begin{document}
\title {On the set of trace ideals of a Noetherian ring}

\author {J\"urgen Herzog and  Masoomeh Rahimbeigi}

\address{J\"urgen Herzog, Fachbereich Mathematik, Universit\"at Duisburg-Essen, Campus Essen, 45117
Essen, Germany} \email{juergen.herzog@uni-essen.de}

\address{Masoomeh Rahimbeigi, Ilam, Haft cheshmeh, Askarinia street, 69391-13111, Iran}
\email{rahimbeigi-masoome@yahoo.com}

\dedicatory{Dedicated to Professor Ernst Kunz}

\begin{abstract}
We consider trace ideals in Noetherian rings and focus our attention to one-dimensional analytically irreducible local rings. For such rings we classify those Gorenstein rings which admit only a finite number of trace ideals.
\end{abstract}

\thanks{}

\subjclass[2010]{Primary 13C99; Secondary 13H05, 13H10.}


\keywords{trace ideals, one-dimensional analytically irreducible rings, fractionary ideals, integrally  ideals, conductor, value semigroup}

\maketitle

\setcounter{tocdepth}{1}

\section*{Introduction}
Let $R$ be a Noetherian ring, $M$ a finitely generated $R$-module, and $\varphi: M\to R$ an $R$-module homomorphism. Then  $\varphi(M)$ is an ideal in $R$. The sum of all such ideals is called the trace of $M$ and is denoted by $\tr(M)$. We call an ideal $I\subseteq R$ a {\em trace ideal} if there exists a finitely generated $R$-module $M$ such that $I=\tr(M)$.  The concept of trace is a classical notion which appears in various contexts. The trace of a module,   which  we consider here,  is not related to the trace map of an endomorphism which  plays an important role in algebraic geometry, see \cite[Appendix H]{Ku2}.   Very recently, the trace of the canonical module  has led to the  notion of nearly Gorenstein rings, see \cite{HHS} and \cite{HHS1}. Of course in general  not all ideals are trace ideals. Lindo and Pande \cite{LP}  proved that a local ring is an artinian Gorenstein ring if and only if every ideal is a trace ideal,  and Kobayashi and Takahashi \cite{KT} classified all Noetherian  local rings  for which every ideal is isomorphic to a trace ideal.

In this paper we focus on the problem to identify trace ideals in $R$. In the first section we recall a few basic properties of  trace ideals which are  mostly borrowed from the paper of Lindo  \cite{Li}. The following important facts  help us to identify the trace ideals.   Namely, an ideal $I$ is  a trace ideal if and only if $I=\tr(I)$. One also  has $\tr(I)=II^{-1}$   for any ideal $I\subsetneq R$ with  $\grade(I)>0$. Here $I^{-1}$ is the set of elements $f$ in the full ring of quotients $Q(R)$ of $R$ with the property that $fI\subsetneq R$. We use the fact that $I$ is a trace ideal if and only if $\tr(I)=I$ to show in Proposition~\ref{gorensteindec22} that a generically Gorenstein ring with canonical ideal $\omega_R$ is Gorenstein if and only if $\omega_R\iso \tr(\omega_R)$.

Another interesting fact is that the sum of trace ideals is again a trace ideal. This has the consequence   that for any given ideal $I$ there exists a unique, with respect to inclusion, maximal  trace ideal contained in $I$ which we denote by $\subtr(I)$, and we have  $\subtr(I)\subseteq I\subseteq \tr(I)$. In general there is no  smallest trace ideal containing $I$. In particular, $\tr(I)$ may not be  the smallest trace ideal containing $I$. A smallest trace ideal containing $I$ would exist if the intersection of trace ideal is again a trace ideal, which also fails to be true in general. We close Section~1 by showing in Theorem~\ref{final} that in a local Cohen--Macaulay domain $R$ the set of trace ideals is precisely the set of ideal of height $>1$,  if and only if $R$ is normal. In Theorem~\ref{final} the hypothesis that $R$ is a domain is required. Indeed, we give an example of a  trace  ideal $I\subsetneq R$  of height 1, where  $R$ is Cohen--Macaulay and normal, but not a domain.  These results leave the question open for which general local rings  an ideal $I$  is a trace ideal if and only if $\height(I)>1$.

In Section~2 we study the trace ideals of one-dimensional analytically irreducible  local rings. Among them are the semigroup rings attached to numerical semigroups. If $(R,\mm)$ is a one-dimensional analytically irreducible  local ring, then its integral closure $(\overline{R}, \nn)$ is a discrete valuation ring and a finite $R$-module. We assume in addition that $R$ and $\overline{R}$ have the same residue class field. The ideal $C=R:\overline{R}$ is called the conductor of $R$. If $\nn=(t)$, then $C=t^c\overline{R}$ for some integer $c\geq 0$. Trace ideals are rigid in the sense that if $I$ and $J$ are trace ideals and $I\iso J$,  then $I=J$. We denote by $\Tc_R$ the set of trace ideals of $R$ and by $\Cc_R$ the set of  ideals $I$ with $C\subseteq I\subseteq R$. It can be seen that $\Tc_R$ is a subset of  $\Cc_R$, which in most cases is a proper subset. There is an explicit set of trace ideals. Indeed for  each $f\in R\setminus \nn C$, the ideal $f\overline{R}\sect R$ is a trace ideal. These are exactly the integrally closed  ideals which contain the conductor.  Let  $v$ denote the valuation induced by $\overline{R}$, and $H=v(R)$ the value semigroup of $R$.  There exist exactly $n(H)+1$ inegrally  ideals containing the conductor, where $n(H)$ denotes the number of elements $h\in H$ with $h<c$.

We denote by $\Oc_R$ the set of overrings  $R'$ of $R$ which are contained in $\overline{R}$. For the rest of the paper Proposition~\ref{gorensteinisdifficult} is crucial.  This result  is   due to Goto, Isobe and  Kumashiro \cite[Corollary 2.8]{GIK}, and it  says that if $R$ is Gorenstein, then  the map $\alpha\: \Tc_R\to \Oc_R$, $I\mapsto  I^{-1}$ is bijective. By using this fact and assuming that $R$ is Gorenstein,  the maximal trace ideal contained in $I$ can be computed for an ideal $I\in  \Cc_R$  as follows: there is a smallest integer $k_0$ such that $(I^{-1})^{k_{0}}=(I^{-1})^{k}$ for all $k\geq k_0$. Then $\subtr(I)=J^{-1}$ where $J=(I^{-1})^{k_{0}}$. Another application is Corollary~\ref{ProfessorKunz} which asserts  that if  $R$ is Gorenstein and $|R/\mm|=\infty$ and $H$ is the value semigroup of $R$,  then the following conditions are equivalent: (i) $R$ has only  finitely many trace ideals, (ii)  $H=\langle 2,a\rangle$ where $a>2$ is an odd number, or $H=\langle 3,4\rangle$ or $H=\langle 3,5\rangle$, (iii) all trace ideals of $R$  are integrally closed.

In the last section we consider certain binary operations on the set of trace ideals in  one-dimensional analytically irreducible local domains. As mentioned before, intersections of trace ideals are  in general not trace ideals. The same is the case for products of trace ideals. If we replace the usual product of two ideals $I$ and $J$ by the product $I*J=(I^{-1}J^{-1})^{-1}$, then it turns out that $I*J$ is a trace ideal if $R$ is Gorenstein and if $I$ and $J$ are trace ideals. Moreover, if $R$ is Gorenstein,  this product is associative and coincides with the largest trace ideal contained in $I\sect J$. In general one has $I*(J+L)\subseteq I*J+I*L$. Equality holds if $R$ is the semigroup ring of a numerical semigroup and the ideals $I$,  $J$ and $L$ are monomial ideals. Hence in this case $\Tc_R$ has the structure of a semiring.

We gratefully acknowledge the use
of the numericalsgps package \cite{Gap}  in GAP for our computations.

\section{The trace of a module and trace ideals}

Let $R$ be a Noetherian commutative ring  and $\Mod_R$ be the category of finitely generated $R$-modules. Let $M\in \Mod_R$. The {\em trace} of $M$, denoted $\tr_R(M)$, is defined to be  the sum of the ideals $\varphi(M)$ with $\varphi \in \Hom_R(M,R)$. Thus
$$
\tr_R(M)=\sum_{\varphi \in \Hom_R(M,R)}\varphi(M).
$$
For simplicity we write $\tr(M)$ instead of $\tr_R(M)$, if no confusion is possible.

It follows immediately from the definition of the trace that $\tr(M)=\tr(N)$ if $M\iso N$.

We call an ideal $I\subsetneq R$ a {\em trace ideal}, if there exists  $M\in\Mod_R$ with $I=\tr(M)$.

Lindo~\cite[Proposition 2.8(iv)]{Li} observed  that trace  ideals are characterized as follows.

\begin{Proposition}
\label{doormakesproblems}
Let $I\subsetneq R$ be an ideal. Then $I\subseteq \tr(I)$, and $I=\tr(I)$ if and only if $I$ is a trace ideal.
\end{Proposition}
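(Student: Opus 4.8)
The plan is to prove both implications by chasing the definition of the trace. First I would establish the inclusion $I\subseteq\tr(I)$. Since $\grade$ plays no role here, the cleanest route is: if $I=0$ the claim is trivial, and otherwise pick any nonzero $a\in I$; then for each $x\in I$ the multiplication map $\mu_x\colon I\to R$, $r\mapsto xr$ (which is well-defined because $I$ is an ideal, so $xI\subseteq I\subseteq R$) is an element of $\Hom_R(I,R)$, and in particular $\mu_a$ maps $a\mapsto a^2$ — but more to the point, for each fixed $x\in I$ we may instead use the identity inclusion: actually the sharp observation is that $x=\mu_1(x)$ would require $1\in\Hom$-territory only if $I=R$, so instead note $x\cdot 1\in R$ forces us to argue via $\mu_x$ applied to an auxiliary element. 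The honest statement is: for $x\in I$, consider that $I\subseteq R$ and the inclusion $\iota\colon I\hookrightarrow R$ is itself an $R$-module homomorphism, so $\iota(I)=I\subseteq\tr(I)$. That single observation already gives $I\subseteq\tr(I)$, and it is the whole content of the first half.

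For the second half, suppose first that $I=\tr(I)$. Then by definition $I$ is the trace of the module $M=I\in\Mod_R$ (it is finitely generated since $R$ is Noetherian), so $I$ is a trace ideal. Conversely, suppose $I$ is a trace ideal, say $I=\tr(M)$ for some $M\in\Mod_R$. I want to conclude $I=\tr(I)$. We already have $I\subseteq\tr(I)$, so it remains to show $\tr(I)\subseteq I$, i.e.\ that every $\varphi\in\Hom_R(I,R)$ satisfies $\varphi(I)\subseteq I$. The key idempotence-type fact to exploit is that $\tr(\tr(M))=\tr(M)$ for any module $M$; granting that, $\tr(I)=\tr(\tr(M))=\tr(M)=I$ and we are done. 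So the real work is the identity $\tr(\tr(M))=\tr(M)$.

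To prove $\tr(\tr(M))=\tr(M)$, write $T=\tr(M)$. The inclusion $T\subseteq\tr(T)$ is the first part of the proposition applied to the ideal $T$. For the reverse inclusion, take $\psi\in\Hom_R(T,R)$; I must show $\psi(T)\subseteq T$. Since $T=\sum_{\varphi}\varphi(M)$ with $\varphi$ ranging over $\Hom_R(M,R)$, it suffices to show $\psi(\varphi(M))\subseteq T$ for each such $\varphi$. But $\psi\circ\varphi\colon M\to R$ is again an element of $\Hom_R(M,R)$ (the composite is defined because $\varphi(M)\subseteq T$, the domain of $\psi$), hence $(\psi\circ\varphi)(M)=\psi(\varphi(M))\subseteq\tr(M)=T$ by definition of the trace. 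Summing over $\varphi$ gives $\psi(T)\subseteq T$, so $\tr(T)\subseteq T$, completing the proof.

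The only mildly delicate point is the composition step $\psi\circ\varphi\in\Hom_R(M,R)$: one must check the codomain of $\varphi$ can be taken to be $T$ rather than $R$, which is exactly the assertion $\varphi(M)\subseteq T=\tr(M)$, true by the very definition of the trace as a sum that includes $\varphi(M)$. Everything else is routine unwinding of definitions; there is no real obstacle, and the argument does not use the Noetherian hypothesis beyond ensuring the ideal $I$ is itself a legitimate object of $\Mod_R$.
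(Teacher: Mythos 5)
Your proof is correct, and it is the standard argument (the paper itself does not prove this statement but cites Lindo's Proposition 2.8(iv), whose proof is exactly your composition trick $\psi\circ\varphi\in\Hom_R(M,R)$ giving $\tr(\tr(M))=\tr(M)$). The only thing to fix is editorial: the first paragraph's detour through the multiplication maps $\mu_x$ is a dead end that you yourself abandon; delete it and keep only the one-line observation that the inclusion $\iota\colon I\hookrightarrow R$ is an $R$-module homomorphism with $\iota(I)=I$.
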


\begin{Corollary}
\label{notallispossible}
Let $I, J\subsetneq R$   be  non-zero  ideals. Then the following holds:
\begin{enumerate}
\item [(a)]  Let $I$ and $J$ be trace ideals. Then $I$ is isomorphic to $J$ if and only if $I=J$.
\item[(b)]  Suppose $I\neq (0)$ and $I=fJ$,  where $f$ is a non zero-divisor which is contained in all maximal ideals of $R$. Then $I$ is not a trace ideal.
\end{enumerate}
\end{Corollary}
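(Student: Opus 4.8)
The plan is to deduce both parts directly from Proposition~\ref{doormakesproblems} together with the remark, recorded just after the definition of the trace, that $\tr(M)=\tr(N)$ whenever $M\iso N$. No further machinery should be needed, so the proof will be short; the work is entirely in assembling these two facts correctly.

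For part (a), one direction is trivial: $I=J$ obviously implies $I\iso J$. For the converse I would assume $I\iso J$ as $R$-modules. Then $\tr(I)=\tr(J)$, and since $I$ and $J$ are trace ideals, Proposition~\ref{doormakesproblems} gives $I=\tr(I)$ and $J=\tr(J)$, so that $I=\tr(I)=\tr(J)=J$.

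For part (b), I would argue by contradiction: assume $I$ is a trace ideal, so $I=\tr(I)$. Since $f$ is a non zero-divisor, multiplication by $f$ gives an $R$-module isomorphism $J\to fJ=I$, hence $\tr(I)=\tr(J)$. Combining $J\subseteq\tr(J)$ from Proposition~\ref{doormakesproblems} with $I=fJ$, I obtain the chain $J\subseteq\tr(J)=\tr(I)=I=fJ\subseteq J$, so $J=fJ$. As $R$ is Noetherian, $J$ is finitely generated, and as $f$ lies in every maximal ideal of $R$ it lies in the Jacobson radical; Nakayama's lemma then forces $J=(0)$, whence $I=fJ=(0)$, contradicting $I\neq(0)$. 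The only mildly delicate point is this last step $J=fJ\implies J=(0)$, which is exactly where the hypotheses that $R$ is Noetherian and that $f$ is contained in all maximal ideals are used.
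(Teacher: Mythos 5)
Your proof is correct and follows essentially the same route as the paper: part (a) is identical, and in part (b) both arguments reduce to $J=fJ$ via $\tr(I)=\tr(J)$ and Proposition~\ref{doormakesproblems}. The only (cosmetic) difference is the final step, where you invoke Nakayama's lemma while the paper iterates to get $f^kJ=J$ for all $k$ and applies Krull's intersection theorem; both correctly use that $J$ is finitely generated and that $f$ lies in the Jacobson radical.
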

\begin{proof}
(a) Suppose $I\iso J$. Then
$
I=\tr(I)=\tr(J)=J.
$

(b) Since $I\iso J$ it follows that $\tr(I)=\tr(J)$. Suppose $I$ is a trace ideal. Then
$fJ=I=\tr(I)=\tr(J)\supseteq J$. Therefore, $fJ=J$, which implies that $f^2J=fJ=J$. Thus,  by induction, $f^kJ=J$ for all $k$, and hence $J=\Sect_{k\geq 0}f^kJ=(0)$, by Krull's intersection theorem, see \cite[Chapter V, Corollary 5.7]{Ku}. This is  a contradiction since $J\neq (0)$.
\end{proof}

In the next section we  give an example which shows that the converse of Corollary~\ref{notallispossible}(b) is not true in general, see Example~\ref{angelhasmanyfriends}.

\medskip
Products of trace ideals need not  be trace ideals. But as shown in \cite[Proposition 2.8(ii)]{Li} one has

\begin{Proposition}
\label{whogetsvaccinefirst}
The sum of  trace ideals is again a trace ideal. Indeed, $$\tr(M\dirsum N) =\tr(M)+\tr(N) \quad \text{for} \quad M,N\in\Mod_R.$$
\end{Proposition}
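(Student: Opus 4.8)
The plan is to prove the identity $\tr(M\dirsum N)=\tr(M)+\tr(N)$ directly from the definition of the trace, and then observe that the assertion that a sum of trace ideals is again a trace ideal follows formally. First I would note that $\Hom_R(M\dirsum N, R)\iso \Hom_R(M,R)\dirsum \Hom_R(N,R)$, so that every $\varphi\in\Hom_R(M\dirsum N,R)$ is uniquely of the form $\varphi(m,n)=\psi(m)+\chi(n)$ with $\psi\in\Hom_R(M,R)$ and $\chi\in\Hom_R(N,R)$. Hence the image $\varphi(M\dirsum N)$ equals $\psi(M)+\chi(N)$, and summing over all such pairs yields $\tr(M\dirsum N)=\sum_{\psi}\psi(M)+\sum_{\chi}\chi(N)=\tr(M)+\tr(N)$. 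One should be a little careful that the sum on the right really ranges over \emph{all} homomorphisms (not just those factoring through a single summand), but this is immediate: taking $\chi=0$ recovers every $\psi(M)$ and taking $\psi=0$ recovers every $\chi(N)$, so both inclusions are clear.

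For the first sentence of the statement, suppose $I$ and $J$ are trace ideals, say $I=\tr(M)$ and $J=\tr(N)$ for finitely generated modules $M$ and $N$. Then by the displayed identity $I+J=\tr(M)+\tr(N)=\tr(M\dirsum N)$, and $M\dirsum N$ is again finitely generated, so $I+J$ is a trace ideal. An equivalent (and perhaps cleaner) way to phrase this, using Proposition~\ref{doormakesproblems}, is to check that $\tr(I+J)=I+J$ whenever $\tr(I)=I$ and $\tr(J)=J$; the inclusion $I+J\subseteq\tr(I+J)$ is always true, and for the reverse inclusion one uses that any $\varphi\:I+J\to R$ restricts to maps on $I$ and on $J$ whose images lie in $\tr(I)=I$ and $\tr(J)=J$ respectively.

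I do not expect any genuine obstacle here: the only thing requiring a moment's thought is the bookkeeping that the trace of a direct sum is computed over \emph{all} homomorphisms out of $M\dirsum N$ rather than over those of the special block-diagonal form, and this is settled by the two observations above (set one component to zero for one inclusion, use bilinearity of images under sums for the other). Everything else is formal manipulation with $\Hom$.
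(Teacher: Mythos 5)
Your argument is correct. The paper does not prove this proposition itself---it simply cites Lindo \cite[Proposition 2.8(ii)]{Li}---so there is no in-text proof to compare against; your computation via $\Hom_R(M\dirsum N,R)\iso \Hom_R(M,R)\dirsum\Hom_R(N,R)$, together with the observation that the image of $\varphi=(\psi,\chi)$ is the submodule $\psi(M)+\chi(N)$, is exactly the standard argument, and both of your routes to the first sentence (realizing $I+J$ as $\tr(M\dirsum N)$, or verifying $\tr(I+J)=I+J$ directly from $\tr(I)=I$ and $\tr(J)=J$) are sound. The only caveat worth recording is definitional and inherited from the paper: a trace ideal is required there to be a \emph{proper} ideal $I\subsetneq R$, so the first sentence tacitly excludes the degenerate case $I+J=R$ (e.g.\ $\tr(R)=R$), while the displayed identity itself holds without any restriction.
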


\begin{Corollary}
\label{mytimeisover}
 Let $I\subseteq R$ be an ideal. Then there exists a  unique trace ideal contained in $I$ which is maximal with respect to inclusion, which we denoted by $\subtr(I)$. Hence
 \[
 \subtr(I)\subseteq I\subseteq  \tr(I),
 \]
and $\subtr(I)\subsetneq I$ if and only if  $I\subsetneq \tr(I)$.
\end{Corollary}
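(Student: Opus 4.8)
The plan is to deduce everything from Proposition~\ref{whogetsvaccinefirst} together with the Noetherian hypothesis. First I would form the family $\mathcal{S}$ of all trace ideals contained in $I$. This family is nonempty, since $(0)=\tr(0)$ is a trace ideal contained in $I$ (and of course $\tr(I)$ itself is not in general contained in $I$, so one really does need to take a sup inside $I$). Because $R$ is Noetherian, the set $\mathcal{S}$, ordered by inclusion, has a maximal element $J$.

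The key step is to show this maximal element is \emph{unique}, i.e.\ that it contains every member of $\mathcal{S}$. Suppose $J'\in\mathcal{S}$ is another trace ideal with $J'\subseteq I$. By Proposition~\ref{whogetsvaccinefirst}, $J+J'=\tr(M\dirsum N)$ is again a trace ideal, where $J=\tr(M)$ and $J'=\tr(N)$; moreover $J+J'\subseteq I$ since both summands lie in $I$. Thus $J+J'\in\mathcal{S}$ and contains $J$, so by maximality $J+J'=J$, whence $J'\subseteq J$. This proves $J$ is the unique maximal trace ideal contained in $I$, and we set $\subtr(I):=J$.

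The chain $\subtr(I)\subseteq I\subseteq \tr(I)$ is then immediate: the left inclusion is the definition of $\subtr(I)$, and the right inclusion is Proposition~\ref{doormakesproblems}. For the final equivalence, note that if $\subtr(I)=I$ then $I$ is a trace ideal, so $I=\tr(I)$ by Proposition~\ref{doormakesproblems}, giving one direction of the contrapositive; conversely if $I=\tr(I)$ then $I$ is a trace ideal contained in itself, hence $\subtr(I)=I$. Negating, $\subtr(I)\subsetneq I$ iff $I\subsetneq\tr(I)$.

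I do not anticipate a serious obstacle here; the only point that needs a little care is not to mistakenly claim $\tr(I)$ is the \emph{smallest} trace ideal containing $I$ (the introduction explicitly warns that intersections of trace ideals can fail to be trace ideals, so there need not be a smallest one). Everything rests on the fact that the relevant family $\mathcal{S}$ is closed under finite sums, which upgrades "a maximal element exists" (Noetherian) to "a maximum element exists".
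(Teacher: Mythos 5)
Your proposal is correct and follows essentially the same route as the paper: both arguments rest on Proposition~\ref{whogetsvaccinefirst} (sums of trace ideals are trace ideals) together with the Noetherian hypothesis, the paper taking the sum of all trace ideals inside $I$ and observing it reduces to a finite sum, while you equivalently take a maximal element and show closure under sums forces it to be the maximum. Your added verification of the final equivalence via Proposition~\ref{doormakesproblems} is also correct.
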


\begin{proof}
The ideal $\subtr(I)$ is just  the  sum of all trace ideals which are contained in $I$. This sum is not empty, because $(0)\subseteq I$ is a trace ideal. Moreover, $\subtr(I)\subseteq I$.  Since $R$ is Noetherian, this sum is a finite sum. Therefore,  by Proposition~\ref{whogetsvaccinefirst}, $\subtr(I)$ is a trace ideal. By its construction,  $\subtr(I)$ is the largest trace ideal contained in $I$.
\end{proof}

\begin{Example}
{\em
Let  $K$ be a field, $R=K[|t^3,t^4|]$ and let $I=(t^3,t^8)$. Then $\tr(I)=(t^3,t^4)$ which is the  maximal ideal of $R$. Since $I\neq \tr(I)$, $I$ is not a trace ideal. On the other hand, the ideal $J=(t^6,t^7,t^8)$ is a trace ideal. Note that  $I/J\iso R/\mm$. Therefore, any ideal $L\subseteq I$ with $J\subsetneq L$ is equal to $I$. This shows that $J$ is the biggest ideal contained in $I$ which is different from $I$. Hence,  $J=\subtr(I)$.  So, in this case we have
\[
\subtr(I)=(t^6,t^7,t^8)\subsetneq (t^3,t^8)\subsetneq (t^3,t^4)=\tr(I).
\]}
\end{Example}

Similarly  to \cite[Proposition 2.8(iii)]{Li}, we have
\begin{Proposition}
\label{manypossibilities}
Let  $R$ be a local ring  or a positively graded $K$-algebra where $K$ is a field, and let $M\in\Mod_R$. When $R$ is graded we assume that $M$ is also graded.  Under these assumptions  the following conditions are equivalent:
\begin{enumerate}
\item[(i)] $\tr(M)=R$.
\item[(ii)] There exists a  module $N$ such that $M\iso N\dirsum R$.
\end{enumerate}
\end{Proposition}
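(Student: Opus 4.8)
The plan is to prove the two implications separately; only the implication (i)$\Rightarrow$(ii) will use that $R$ is local or positively graded.

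For (ii)$\Rightarrow$(i), which actually holds over any Noetherian ring, I would simply apply Proposition~\ref{whogetsvaccinefirst} to get $\tr(M)=\tr(N)+\tr(R)$ and observe that $\tr(R)=R$, since $\id_R\in\Hom_R(R,R)$ has image $R$. Hence $\tr(M)=R$.

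For (i)$\Rightarrow$(ii) the key point is that $\tr(M)=R$ forces a \emph{single} homomorphism $M\to R$ to be surjective; the splitting is then automatic because $R$ is (graded-)free of rank one, hence (graded-)projective. In the local case I would write $1\in\tr(M)=\sum_{\varphi}\varphi(M)$ as a finite sum $1=\sum_{i=1}^{n}\varphi_i(m_i)$ with $\varphi_i\in\Hom_R(M,R)$ and $m_i\in M$; since $1\notin\mm$ and $\mm$ is an ideal, some $\varphi_j(m_j)$ must be a unit, so $\varphi_j\colon M\to R$ is surjective. The exact sequence $0\to\ker\varphi_j\to M\to R\to 0$ then splits, and setting $N=\ker\varphi_j$ gives $M\iso N\dirsum R$. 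In the graded case I would first note that $\tr(M)$ is a graded ideal: as $M$ is finitely generated, every $\varphi\in\Hom_R(M,R)$ is a finite sum of homogeneous homomorphisms, so $\tr(M)$ is the sum of the graded ideals $\varphi(M)$ over homogeneous $\varphi$. Since $\tr(M)=R$ is not contained in the graded maximal ideal $\mm=\Dirsum_{i>0}R_i$, some homogeneous $\varphi$ has $\varphi(M)\not\subseteq\mm$; then the graded ideal $\varphi(M)$ has nonzero degree-$0$ component, which lies in $R_0=K$ and is therefore a unit, so $\varphi(M)=R$ and $\varphi$ is surjective. Viewing $\varphi$ as a degree-preserving surjection onto the shifted free module $R(a)$ with $a=\deg\varphi$, which is graded-projective, the sequence splits in the category of graded modules; and since $R(a)\iso R$ as ungraded $R$-modules, we again obtain $M\iso N\dirsum R$ with $N=\ker\varphi$ finitely generated.

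The only genuinely delicate point is the graded case: one has to verify that $\tr(M)$ is homogeneous and keep track of the internal degree shift of the splitting surjection. Everything else — in particular the reduction to one surjective homomorphism and the splitting via projectivity of $R$ — is immediate.
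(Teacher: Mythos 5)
Your proof is correct and follows essentially the same route as the paper: both directions are handled identically, with (ii)$\Rightarrow$(i) via Proposition~\ref{whogetsvaccinefirst} and (i)$\Rightarrow$(ii) by producing a single surjective $\varphi\colon M\to R$ and splitting it using that $R$ is free. The only difference is that you spell out the detail the paper compresses into ``our assumption on the ring $R$ implies that there exists $\varphi$ with $\varphi(M)=R$'' (the unit-in-a-finite-sum argument in the local case and the homogeneity of $\tr(M)$ in the graded case), which is a welcome but not substantively different elaboration.
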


\begin{proof}
(i) \implies (ii): Since $\tr_R(M)=\sum_{\varphi \in \Hom_R(M,R)}\varphi(M)=R$,  our assumption on  the ring $R$ implies that  there exists $\varphi\: M\to R$ with $\varphi(M)=R$.  Therefore $\varphi\: M\to R$ is surjective. Since $R$ is a free $R$-module, the map $\varphi$  splits. This implies that $M\iso \Ker(\varphi)\dirsum R$.

(ii) \implies (i): Since $M\iso N\dirsum R$, it follows from Proposition~\ref{whogetsvaccinefirst} that $\tr(M)=\tr(N\dirsum R)=\tr(N)+\tr(R)=R$. This shows that $\tr(M)=R$.
\end{proof}
Let $Q(R)$ be a full ring of fractions of $R$. Then for any ideal $I$ with $\grade(I)\geq1$,  one has the well-known fact that
\begin{eqnarray}
\label{traceofideal}
\tr(I)=II^{-1},
\end{eqnarray}
where $I^{-1}= \{f \in Q(R)\:\; fI\in R\}$, see  \cite[Proposition 2.8(iii)]{Li}. Therefore,  if $\grade(I)\geq 1$, Proposition~\ref{doormakesproblems} implies that  $I$ is  a trace ideal if and only if $I=II^{-1}$.

\medskip
 Observe that $I^{-1}$ is a fractionary ideal. Recall that  any  finitely generated  $R$-submodule  of $Q(R)$ is called  a {\em fractionary ideal}. In other words, if $I$ is a fractionary ideal, then  there exist
$f_1,\ldots,f_r\in Q(R)$ such that $I=(f_1,\ldots,f_r)$. We have   $f_i=h_i/g_i$ with $g_i,h_i\in R$, where   $g_i$ is a non zero-divisor. It follows that $gI$ is an ideal in $R$ for $g=g_1g_2\cdots g_r$ and shows that,  as an $R$-module, any fractionary ideal is isomorphic to an ideal of $R$.

We use the identity (\ref{traceofideal}) to give the following characterization of Gorenstein rings.

\begin{Proposition}
\label{gorensteindec22}
Let $R$ be a local Cohen--Macaulay ring  and suppose that $R$ admits a canonical module $\omega_R$. Then $R$ is Gorenstein if and only if $\omega_R\iso \tr(\omega_R)$.
\end{Proposition}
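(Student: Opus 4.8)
The ``only if'' direction is immediate: if $R$ is Gorenstein then $\omega_R\iso R$, and since the identity map of $R$ is surjective we have $\tr(R)=R$, so $\tr(\omega_R)=\tr(R)=R\iso\omega_R$. For the converse, suppose $\omega_R\iso\tr(\omega_R)$ and put $I=\tr(\omega_R)$, an ideal of $R$ with $I\iso\omega_R$. I would first record three facts. Since $I\iso\omega_R$ we get $\tr(I)=\tr(\omega_R)=I$, so $I$ is a trace ideal. Since $I$ is isomorphic to the faithful module $\omega_R$ it is faithful, hence not contained in any associated prime of $R$ (a generator of such a prime would annihilate $I$), so by prime avoidance $I$ contains a non zero-divisor; in particular $\grade(I)\geq1$. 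Finally, the identity $\tr(I)=II^{-1}$ (valid because $\grade(I)\geq1$) together with $\tr(I)=I$ gives $II^{-1}=I$. From $II^{-1}=I$ one reads off $I^{-1}\subseteq(I:I)$, because for $f\in I^{-1}$ the ideal $fI$ is one of the summands of $II^{-1}=I$; the reverse inclusion is trivial, so $I^{-1}=(I:I)$. Now $(I:I)\iso\Hom_R(I,I)\iso\Hom_R(\omega_R,\omega_R)\iso R$, the last isomorphism being the basic property that the homothety map $R\to\Hom_R(\omega_R,\omega_R)$ is bijective. Thus the ring $(I:I)$, lying between $R$ and $Q(R)$, is a cyclic $R$-module; writing $(I:I)=Rs$ with $1=as$ one checks that $a$ is a non zero-divisor with $s=a^{-1}$, and then $s^2\in Rs$ forces $a^{-1}\in R$, so $(I:I)=R$. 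Hence $I^{-1}=R$, i.e. $\Hom_R(I,R)=R$.

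The decisive step is to feed $I^{-1}=R$ into the short exact sequence $0\to I\to R\to R/I\to0$. Applying $\Hom_R(-,R)$ and using $\Hom_R(R/I,R)=(0:_RI)=0$, together with the fact that the natural map $R=\Hom_R(R,R)\to\Hom_R(I,R)$ is the inclusion $R\subseteq I^{-1}=R$ and hence an isomorphism, one obtains $\Ext^1_R(R/I,R)=0$. Since $\Ext^0_R(R/I,R)$ and $\Ext^1_R(R/I,R)$ both vanish, Rees's theorem (which identifies $\grade(I)$ with the least $i$ such that $\Ext^i_R(R/I,R)\neq0$) forces $\grade(I)\geq2$, unless $I=R$, in which case $\omega_R\iso R$ and we are done. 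Now $\omega_R$ is a maximal Cohen--Macaulay module with $\Ass(\omega_R)=\Ass(R)$, so an $R$-regular sequence contained in $I$ is also $\omega_R$-regular, whence $\grade(I,\omega_R)\geq2$ and therefore $\Hom_R(R/I,\omega_R)=\Ext^1_R(R/I,\omega_R)=0$. Applying $\Hom_R(-,\omega_R)$ to the same short exact sequence now collapses it to
\[
\omega_R=\Hom_R(R,\omega_R)\iso\Hom_R(I,\omega_R)\iso\Hom_R(\omega_R,\omega_R)\iso R,
\]
where the middle isomorphism uses $I\iso\omega_R$. Hence $\omega_R\iso R$ and $R$ is Gorenstein.

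The main obstacle is locating exactly this piece of leverage. The conditions $\omega_R\iso\tr(\omega_R)$, $\;I^{-1}=R$, and $(I:I)=R$ are all equivalent to one another, and none of them is visibly ``Gorenstein'': for example, for $R=K[|t^3,t^4,t^5|]$ the ideal $\tr(\omega_R)=\mm$ is a trace ideal, yet $\mm\not\iso\omega_R$ since $\mu(\mm)=3\neq2=\mu(\omega_R)$, so one cannot recover Gorensteinness from the trace-ideal structure of $\tr(\omega_R)$ alone, and naive manipulation of these reformulations just runs in circles. What breaks the deadlock is that being a trace ideal is strong enough to push $\grade(I)$ up to $2$ via the vanishing of $\Ext^1_R(R/I,R)$; once that is known, the maximal Cohen--Macaulayness of $\omega_R$ annihilates the obstruction $\Ext^1_R(R/I,\omega_R)$ and the defining short exact sequence of $I$ degenerates to the isomorphism $\omega_R\iso R$.
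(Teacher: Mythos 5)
Your proof is correct, and it takes a genuinely different route from the paper's. The paper begins by localizing at the minimal primes and comparing lengths via Matlis duality to show that $\omega_R\iso\tr(\omega_R)$ forces $R$ to be generically Gorenstein; only then can it realize $\omega_R$ as a fractionary ideal, after which it computes $R=\omega_R:\omega_R\iso(\omega_R^{-1})^{-1}$, deduces that $\omega_R^{-1}\iso R$, and finishes by quoting the duality result of \cite{HK1} (Korollar 7.33) that a canonical ideal with principal inverse is itself principal. You bypass the generic-Gorenstein reduction entirely by observing that the hypothesis already presents $\omega_R$ as isomorphic to the honest ideal $I=\tr(\omega_R)$; your first half --- from $II^{-1}=I$ and $\End_R(\omega_R)\iso R$ to $I^{-1}=(I:I)=R$, including the neat elementary argument that a subring of $Q(R)$ containing $R$ and free of rank one over $R$ must equal $R$ --- is the analogue of the paper's computation that $\omega_R^{-1}\iso R$. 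The endgames diverge: where the paper invokes the Herzog--Kunz machinery, you feed $I^{-1}=R$ into the long exact sequence to kill $\Ext^1_R(R/I,R)$, apply Rees's theorem to force $\grade(I)\geq 2$ when $I\neq R$, and then use the maximal Cohen--Macaulayness of $\omega_R$ (so that $\Hom_R(R/I,\omega_R)=\Ext^1_R(R/I,\omega_R)=0$) to collapse $0\to I\to R\to R/I\to 0$ under $\Hom_R(-,\omega_R)$ into $\omega_R\iso R$. This version is more self-contained, using only standard canonical-module facts and Rees's theorem in place of the generic-Gorenstein reduction and the reflexivity apparatus. Two small remarks: the parenthetical justification that a faithful ideal avoids associated primes should say that if $I\subseteq P=\Ann_R(x)$ then the element $x$ (not a generator of $P$) annihilates $I$; and your case $I\neq R$ with $\grade(I)\geq 2$ is in fact vacuous, since a proper ideal isomorphic to $\omega_R$ necessarily has grade one --- but since you still derive the desired conclusion $\omega_R\iso R$ inside that case, the logic is unaffected.
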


\begin{proof}
If $R$ is Gorenstein, then $\omega_R\iso R$, and hence $\tr(\omega_R)=\tr(R)=R=\omega_R$.

Conversely, suppose that $\omega_R\iso \tr(\omega_R)$. Let $P$ be a minimial prime ideal of $R$. Then $\omega_{R_P}\iso (\omega_R)_P\iso  \tr(\omega_R)_P\iso \tr(\omega_{R_P})$. Therefore, considering the length of the modules,  we get  $\length(\tr(\omega_{R_P}))=\length(\omega_{R_P})=\length(R_P)$, where the last equality follows from Matlis duality.  This implies that $\tr(\omega_{R_P})=R_P$. Thus $R_P$ is Gorenstein. Thus, our hypothesis that $\omega_R\iso \tr(\omega_R)$ implies that $R$ is generically Gorenstein.

If $\dim R=0$, then $R$ is Gorenstein and hence $\omega_R\iso R$ and there is nothing to prove. Suppose now that $\dim R>0$. Since $R$ is generically Gorenstein, it follows that  $\omega_R$ is isomorphic to a fractionary ideal, see \cite[Korollar~6.7]{HK1} or \cite[Proposition 3.3.18]{BH}. Thus, our assumption implies that $\omega_R\iso \omega_R\omega_R^{-1}$. Then
\[
R=\omega_R:\omega_R\iso\omega_R: (\omega_R\omega_R^{-1})=(\omega_R:\omega_R):\omega_R^{-1}=R:\omega_R^{-1}=(\omega_R^{-1})^{-1}.
\]
The first equality follows from \cite[Satz 6.1]{HK1} or \cite[Theorem 3.3.10]{BH}. Taking once again the inverse on both sides of this equation, we find that $R=((\omega_R^{-1})^{-1})^{-1}=\omega_R^{-1}$. For the last equation we use
 the fact  that $I^{-1}$ is reflexive for all ideals $I$ containing a non zero-divisor. Now since $\omega_R^{-1}$ is a principal ideal, \cite[Korollar 7.33]{HK1} implies that $\omega_R$ itself is a principal ideal, which in turn implies that $R$ is Gorenstein.
\end{proof}

\begin{Proposition}
\label{maximalideal}
Let $K$ be  a field, and  let  $R$ be a local ring or a positively graded $K$-algebra. Furthermore, let  $\mm$ be  the maximal (graded) ideal of $R$.  If $\mm$  is not a principal ideal, then $\mm$ is a trace ideal.
\end{Proposition}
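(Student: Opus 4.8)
The plan is to apply the criterion of Proposition~\ref{doormakesproblems}, according to which $\mm$ is a trace ideal precisely when $\tr(\mm)=\mm$. Since $\mm\subseteq\tr(\mm)$ in any case and $\tr(\mm)$ is an ideal of $R$ while $\mm$ is the (graded) maximal ideal, the only possibilities are $\tr(\mm)=\mm$ and $\tr(\mm)=R$; in the graded situation one uses here that a graded ideal containing $R_+=\mm$ is either $R_+$ or $R$, because $R/R_+$ is the field $K$. Hence it suffices to prove the contrapositive statement: if $\tr(\mm)=R$, then $\mm$ is a principal ideal.

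So suppose $\tr(\mm)=R$. Since $\tr(\mm)=\sum_{\varphi\in\Hom_R(\mm,R)}\varphi(\mm)$ and a sum of ideals all contained in $\mm$ would again lie in $\mm$, there must exist some $\varphi\colon\mm\to R$ with $\varphi(\mm)\not\subseteq\mm$; as $\varphi(\mm)$ is an ideal and $\mm$ is maximal, this means $\varphi(\mm)=R$, i.e. $\varphi$ is surjective (in the graded case $\varphi$ may be taken homogeneous; alternatively one can appeal to Proposition~\ref{manypossibilities}, which yields $\mm\iso N\dirsum R$ and hence such a $\varphi$ by projection). As $R$ is a free, hence projective, $R$-module, $\varphi$ splits: there is $\psi\colon R\to\mm$ with $\varphi\circ\psi=\id_R$. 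Put $x=\psi(1)\in\mm$, so that $\varphi(x)=1$.

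The decisive point is now a short computation. Let $y\in\mm$ be arbitrary. Then $xy\in\mm$, because $\mm$ is an ideal, so $\varphi(xy)$ is defined, and $R$-linearity of $\varphi$ lets us evaluate it in two ways: regarding $xy$ as $y\cdot x$ with $y\in R$ acting on $x\in\mm$ gives $\varphi(xy)=y\varphi(x)=y$, while regarding $xy$ as $x\cdot y$ with $x\in R$ acting on $y\in\mm$ gives $\varphi(xy)=x\varphi(y)$. Therefore $y=x\varphi(y)\in xR$. Since $y\in\mm$ was arbitrary, $\mm\subseteq xR\subseteq\mm$, so $\mm=(x)$ is principal, which is the desired contradiction. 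I do not expect a genuine obstacle: the whole argument rests on the identity $y=\varphi(xy)=x\varphi(y)$, and the only place asking for a little care is the reduction step above, where one must check that an ideal properly containing $\mm$ equals $R$ and that $\tr(\mm)=R$ produces a single surjection $\mm\to R$ — both routine once one recalls that $\tr(\mm)$ is a sum of ideals of the form $\varphi(\mm)$.
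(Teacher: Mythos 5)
Your proof is correct, and its first half (reduce to the case $\tr(\mm)=R$, extract a single surjection $\varphi\colon\mm\to R$, and split it using that $R$ is free) is exactly the route the paper takes via Proposition~\ref{manypossibilities}. Where you diverge is in the concluding step. The paper converts the abstract isomorphism $\mm\iso N\dirsum R$ into an internal decomposition $\mm=I+(f)$ with $I\sect(f)=(0)$ and $f$ a non zero-divisor, and then kills $I$ by observing that any $0\neq g\in I$ would give $0\neq fg\in I\sect(f)$. You instead never form the internal direct sum: you evaluate $\varphi(xy)$ in two ways for $x=\psi(1)$ and arbitrary $y\in\mm$, getting $y=y\varphi(x)=x\varphi(y)$ and hence $\mm=(x)$ directly. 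The two arguments prove the same splitting lemma; yours is slightly more economical in that it does not need to record that the generator is a non zero-divisor, nor to transport the direct sum decomposition inside $\mm$, while the paper's version makes the structure $\mm=(f)\dirsum I$ explicit, which is perhaps more transparent about \emph{why} a maximal ideal with a free summand must already equal that summand. Your handling of the graded case (reducing to homogeneous $\varphi$, and using that an ideal containing $R_+$ is $R_+$ or $R$) is also sound.
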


\begin{proof}
Suppose $\mm$ is not a trace ideal. Then $\tr(\mm)\neq \mm$. Since $\mm\subsetneq \tr(\mm)$,  it follows that $\tr(\mm)=R$. Then, by Proposition~\ref{manypossibilities}, we have $\mm\iso N\dirsum R$ for some module $N$.
This means that there exists an ideal $I\subsetneq\mm$ and  a non zero-divisor $f\in \mm$ such that $\mm= I+(f)$ and $I\sect (f)=(0)$. Suppose $I\neq (0)$. Then there exists $0\neq g\in I$. Since $f$ is a non zero-divisor $fg\neq 0$. Since $fg\in I\sect (f)$ it follows that $I\sect (f)\neq(0)$, a contradiction. Hence $\mm=(f)$ and this means $\mm$ is a principal ideal.
\end{proof}

 Let $R=K[x]/(x^2)$. Then the maximal ideal of $R$ is a principle ideal and also a trace ideal. This shows that the converse of Proposition~\ref{maximalideal} is  in general not correct.

\medskip

Finally we recall the following result \cite[Example 2.4]{Li}.

\begin{Proposition}
\label{grade}
Suppose $I\subsetneq R$ is an ideal with $\grade(I)>1$. Then $I$ is a trace ideal. In particular, if $R$ is  Cohen-Macaulay  and  $\height(I)>1$, then $I$ is a trace ideal.
\end{Proposition}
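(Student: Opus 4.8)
The plan is to use the characterization of trace ideals from Proposition~\ref{doormakesproblems}, namely that $I$ is a trace ideal precisely when $I = \tr(I)$, together with the identity $\tr(I) = II^{-1}$ from~(\ref{traceofideal}), which applies since $\grade(I) > 1 \geq 1$. So it suffices to show $II^{-1} = I$, equivalently $I^{-1} \subseteq (I : I)$, i.e. that every $f \in Q(R)$ with $fI \subseteq R$ already satisfies $fI \subseteq I$. Because $\grade(I) > 1$, there is a regular sequence $x, y \in I$ of length two.

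The key step is the following: given $f \in I^{-1}$, write $f = a/b$ with $b$ a non zero-divisor. For any $g \in I$ we have $fg \in R$, so $ag = b \cdot (fg) \in bR$. In particular, applying this with $g = x$ and $g = y$, the element $fx = ax/b$ and $fy = ay/b$ both lie in $R$. Now observe that $(fx)\cdot y = (fy)\cdot x$ as elements of $R$, since both equal $axy/b$. Since $x,y$ is a regular sequence and $(fy)\cdot x \in xR$ is divisible by $x$, the standard consequence of regularity — namely that the Koszul relation is the only relation, so $y \cdot (\text{something}) \in xR$ forces that something into $xR$ — gives $fx \in xR \subseteq I$. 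The same argument with the roles of $x$ and $y$ exchanged (or more directly, for an arbitrary $g \in I$, using the pair $x, g$ if they still form part of a regular sequence, but cleaner: use that $f x \in I$ and then $f g \cdot x = f x \cdot g \in I \cdot g \subseteq I$ for all $g$, hence $x \cdot (fI) \subseteq I$, and since $x$ is regular modulo nothing special we need a touch more care) — the cleanest route is: we have shown $fx \in R$; then for any $g \in I$, $(fg) x = (fx) g \in R$ and also $(fg)$ is already in $R$, and $x(fg) = g(fx) \in (fx)R$. I would instead argue symmetrically that $fg \in I$ for all $g$ directly: from $fxg = (fg)x$ and $fx \in xR$, write $fx = xh$ with $h \in R$; then $(fg)x = xhg$, and since $x$ is a non zero-divisor, $fg = hg \in R$; but we want $fg \in I$, which needs $hg \in I$ — not automatic. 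So the right formulation: show $f \in R$ outright. Indeed $fx \in xR$ means $fx = xh$, $h\in R$; since $x$ is a non zero-divisor and $fx = xh = xf'$... this gives $f = h \in R$ directly once we know $fx \in xR$. Hence $I^{-1} \subseteq R$, so $II^{-1} = I$, and $I$ is a trace ideal.

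The main obstacle is getting the regularity argument stated correctly: the crucial point is that $\grade(I) > 1$ yields $x, y \in I$ forming a regular sequence, and then $f \in Q(R)$ with $fI \subseteq R$ forces $fx \in R$ and $fy \in R$ with $y(fx) = x(fy)$ inside $R$; since $y$ is a non zero-divisor modulo $(x)$ and $x(fy) \in (x)$, one concludes $fx \in (x)$, hence $f \in R$ because $x$ is regular. Thus $I^{-1} = R:I \subseteq R$ in fact equals... we only need $I^{-1}\subseteq R$, giving $\tr(I) = II^{-1} \subseteq I$, and combined with $I \subseteq \tr(I)$ we get equality. The final sentence about Cohen--Macaulay rings is then immediate: in a Cohen--Macaulay ring $\height(I) > 1$ implies $\grade(I) = \height(I) > 1$.
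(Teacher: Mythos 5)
Your final argument is correct, and it is worth noting that the paper itself gives no proof here: it simply cites Lindo's Example~2.4. What you have reconstructed is exactly the standard mechanism behind that citation, namely that $\grade(I)\geq 2$ forces $I^{-1}=R$: picking a regular sequence $x,y\in I$, any $f\in I^{-1}$ satisfies $fx,fy\in R$ and $y\cdot(fx)=x\cdot(fy)\in (x)$, so regularity of $y$ modulo $(x)$ gives $fx\in(x)$, and regularity of $x$ (hence invertibility in $Q(R)$) gives $f\in R$; then $\tr(I)=II^{-1}=I$ by (\ref{traceofideal}) and Proposition~\ref{doormakesproblems}, and the Cohen--Macaulay statement follows from $\grade(I)=\height(I)$. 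This is a perfectly good self-contained proof and arguably an improvement on the paper, which leaves the reader to chase the reference. The one criticism is editorial rather than mathematical: the middle of your writeup records two or three abandoned attempts (the ``use the pair $x,g$'' detour, the ``needs $hg\in I$ --- not automatic'' dead end) before arriving at the correct conclusion $f\in R$. These false starts contain genuinely flawed reasoning and must be deleted from a final version; only the last formulation --- $fx\in xR$ implies $f\in R$, hence $I^{-1}\subseteq R$, hence $II^{-1}\subseteq I\subseteq \tr(I)=II^{-1}$ --- should survive.
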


 Now we show

\begin{Theorem}
\label{final}
 Let $R$ be a Cohen--Macaulay domain. Then the following conditions are equivalent:
 \begin{enumerate}
 \item[(i)] $I$ is a trace ideal of $R$ if and only if $\grade(I)>1$.
 \item[(ii)] $R$ is normal.
 \end{enumerate}
\end{Theorem}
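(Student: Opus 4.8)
The plan is to prove both implications using the characterization that $I$ is a trace ideal if and only if $I=II^{-1}$ (valid since in a domain every nonzero ideal has grade $\geq 1$), together with the standard fact that normality is equivalent to $R$ being a Krull domain that is also integrally closed in its fraction field, and in the local Cohen--Macaulay case normal means regular in codimension one plus $(S_2)$ --- but since $R$ is already Cohen--Macaulay, $(S_2)$ is automatic, so normality is equivalent to $R_P$ being a DVR for every height-one prime $P$.

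For the direction (ii) $\Rightarrow$ (i): Proposition~\ref{grade} already gives that $\height(I)>1$ implies $I$ is a trace ideal, so only the converse needs work. Suppose $I$ is a trace ideal with $\height(I)=1$ (grade one, since $R$ is Cohen--Macaulay and a domain). Pick a height-one prime $P\in\Min(I)$. Localizing, $I_P=\tr(I)_P=\tr(I_P)=I_PI_P^{-1}$ in the local ring $R_P$, which by normality is a DVR, hence a PID; but in a DVR every nonzero proper ideal $\mm_P^k$ satisfies $\mm_P^k(\mm_P^k)^{-1}=\mm_P^k\cdot\mm_P^{-k}=R_P\neq\mm_P^k$, so no proper nonzero ideal of a DVR is a trace ideal. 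This forces $I_P=R_P$, contradicting $P\in\Min(I)$. Hence a trace ideal cannot have height one, and combined with the fact that $I\neq R$ is never of height zero in a domain, every trace ideal has height $>1$.

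For the direction (i) $\Rightarrow$ (ii): Assume $R$ is not normal; I want to produce a height-one trace ideal. Since $R$ is a Cohen--Macaulay (hence $(S_2)$) domain that is not normal, there is a height-one prime $P$ with $R_P$ not a DVR, equivalently not regular. The natural candidate for a height-one trace ideal is the conductor $C=R:\overline{R}$ of $R$ into its integral closure $\overline{R}$, or more locally $R_P:\overline{R_P}$; the conductor is always a trace ideal because $C=CC^{-1}$ --- indeed $C^{-1}=\overline{R}$ when $R$ is not normal (this uses that $C$ is the largest ideal of $R$ that is also an ideal of $\overline R$, so $C\overline R=C$, giving $\overline R\subseteq C^{-1}$, and the reverse inclusion $C^{-1}\subseteq\overline R$ follows because any $f$ with $fC\subseteq R$ is integral over $R$), whence $CC^{-1}=C\overline R=C$. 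Since $R$ is not normal, $C\neq R$, and I claim $\height C=1$: this is where the Cohen--Macaulay and domain hypotheses enter, since $\height C\geq 2$ would force $R$ to agree with $\overline R$ in codimension one, i.e.\ $R_P$ a DVR for all height-one $P$, and then the $(S_2)$ property of $R$ (from Cohen--Macaulayness) would give $R=\overline R$ by Serre's criterion, contradicting non-normality. So $C$ is a trace ideal of height exactly one, violating (i).

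The main obstacle is the $(\text{i})\Rightarrow(\text{ii})$ direction, specifically the verification that the conductor has height one rather than higher --- this is the step that genuinely uses all three hypotheses (Cohen--Macaulay, domain, and the failure of normality) via Serre's $(R_1)+(S_2)$ criterion, and one must be careful that $\overline R$ is module-finite over $R$ (true since $R$ is, say, excellent or more minimally a domain whose normalization is finite --- one may need to add this as a standing hypothesis, or note it holds in the cases of interest; the paper's Section~2 context already assumes analytic irreducibility which gives finiteness, but here in full generality a Nagata-type hypothesis is needed). I would flag this and either invoke that $R$ is assumed to admit a finite integral closure or restrict to that setting; the rest of the argument is then routine.
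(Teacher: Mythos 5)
Your (ii)\implies(i) argument is correct and is essentially the paper's: localize a putative grade-one trace ideal $I$ at a height-one prime $P\in\Min(I)$, use that trace commutes with localization, and observe that no nonzero proper ideal of a DVR is a trace ideal (the paper invokes Corollary~\ref{notallispossible}(b); your direct computation $\mm_P^k(\mm_P^k)^{-1}=R_P$ is an equivalent way to see it). Together with Proposition~\ref{grade} this settles that direction.

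The genuine gap is in (i)\implies(ii), and it is exactly the one you flag yourself: your argument produces a height-one trace ideal only if the conductor $C=R:\overline{R}$ is a nonzero proper ideal, which requires $\overline{R}$ to be a fractionary ideal, i.e.\ essentially module-finite over $R$. The theorem carries no Nagata, excellence, or analytic hypothesis, and a Noetherian Cohen--Macaulay domain need not have finite normalization (already in dimension one there exist Noetherian local domains, automatically Cohen--Macaulay, whose integral closure is not a finite module); in that case $C=(0)$ and your candidate trace ideal vanishes, so the proof does not establish the theorem as stated. Your subsidiary claims are fine when finiteness does hold: $C^{-1}=\overline{R}$ (hence $CC^{-1}=C\overline{R}=C$), and $\height C\geq 2$ would force $(R_1)$ and hence normality via Serre. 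But the paper closes the gap by never mentioning $\overline{R}$ at all: for a height-one prime $P$, hypothesis (i) gives $P\subsetneq\tr(P)$, so $\tr(PR_P)=\tr(P)R_P=R_P$ and $PR_P$ is not a trace ideal of $R_P$; Proposition~\ref{maximalideal} then says that a non-principal maximal ideal of a local ring is always a trace ideal, so $PR_P$ must be principal and $R_P$ is a DVR. This yields $(R_1)$ directly, and Serre's criterion finishes. To repair your write-up, either add the standing hypothesis that $\overline{R}$ is a finite $R$-module (weakening the theorem) or replace the conductor argument by this local argument through Proposition~\ref{maximalideal}.
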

\begin{proof}
(i)\implies (ii): By our assumption, if $I$ is an ideal with $\grade(I)=1$,  then $I$ is not a trace ideal. Hence, if $P$ is a prime ideal of height one, it follows that $P\neq \tr(P)$. Therefore, $P\subsetneq \tr(P)$. Hence,  after localization $R_P=\tr(P)R_P=\tr(PR_P)$. Here we use that trace localizes.  Since $PR_P\neq R_P$,  this shows that $PR_P$ is not a trace ideal. By Proposition~\ref{maximalideal}, $PR_P$ is a principal ideal. Therefore $R_P$ is a discrete valuation ring. It follows  that $R$ satisfies the Serre condition $R_1$.    By  Serre's  normality criterion \cite[ § 23]{Mats} this implies that $R$ is normal.

(ii)\implies (i): Suppose there exists  a trace ideal $I$ with $\grade(I)=1$. Then,  since $R$ is Cohen--Macaulay, $\height(I)=1$,  and hence there exists a prime ideal $P$ of height one with $I\subseteq P$. Now since $I$ is a trace ideal we have $I=\tr(I)$ and hence $IR_P=\tr(IR_P)$. Therefore, $IR_P$ is a trace ideal of $R_P$ which is different from $R_P$.  By assumption, $R$ is normal.  Therefore $R_P$ is a discrete valuation. This  implies that  $IR_P$ is a  principal ideal generated by non a zero-divisor, contradicting Corollary~\ref{notallispossible}(b).
\end{proof}
\begin{Example}
\label{thenicestfaceintheworld}
{\em
Let $K$ be a field, $S=K[x,y]$ the polynomial ring over $K$ with the variables  $x$ and $y$ and $R=S/(xy)$. Then $R$ is a one dimensional Cohen--Macaulay ring. Also  $R$ is normal because it is the direct product of normal domains, see \cite[§ 23]{Mats}. Indeed, $R\iso K[x]\times K[y]$.

Set $\mm=(x,y)$. Next we claim that $\tr(\mm)=\mm.$ Since $\grade(\mm)=1$ and $R$ is normal, this example shows that in Theorem~\ref{final} we cannot drop the hypothesis that $R$ is a domain.

In order to prove the claim we assume to the contrary that $\tr(\mm)\neq \mm$. Then $\tr(\mm)=R$ and hence there exists a surjective $R$-module homomorphism $\epsilon\:\mm\to R$. Let $Z$ be the kernel of this epimorphism and let $t=x+y$. Then $t\in \mm$ is a non zero-divisor and hence $\mm R_t=R_t$. Here $R_t$ is the ring which is obtained from $R$ by localization with respect to powers of $t$. It follows that $\varepsilon$ after localization becomes an isomorphism. Therefore, $ZR_t=0$. This implies that $t^kZ=0$ for some $k$. Hence $Z$ is a  $R/(t^k)$--module. Since $\dim R=1$ and since $t$ is a non zero-divisor  it follows that $\dim R/(t^k)=0$, and so, $\dim Z=0$. In particular, $\depth Z=0$. Since $Z$ is a submodule of $\mm$, $\depth \mm=0$. This is a contradiction, because $\depth \mm=1$.
}
\end{Example}

\begin{Question}

{\em
Which are the rings with the property that an ideal $I$ is a trace ideal if and only if $\height(I)>1$?}
\end{Question}

\section{Trace ideals of one-dimensional analytically irreducible local rings}

In this section we assume that $(R,\mm)$ is a $1$-dimensional domain with quotient field $Q(R)$ whose integral closure $\overline{R}\subsetneq Q(R)$ is a discrete valuation ring and a finite $R$-module. This is equivalent to saying that $R$ is analytically irreducible.   Let $v\: Q(R)\to \ZZ$ be the valuation attached to $R$. The set $v(R)=\{v(r)\: r\in R\setminus\{0\} \}$ is called the {\em value semigroup} of $R$. For details we refer to \cite{HK}.

For a non-empty subset $A\subseteq \ZZ_{\geq 0}$ we denote by $\langle A\rangle$ the smallest additively closed subset containing $A$. Let $H=\langle A\rangle$. One has   $\gcd(A)=1$ if and only if  there exists an integer $c\geq 0$ such that $H =c+\ZZ_{\geq 0}$. In this case,  $H$ is called a {\em numerical semigroup}. The elements of $A$ are called the {\em generators} $H$. Any numerical semigroup is finitely generated. The value semigroups $v(R)$ of $R$ is a numerical semigroup. The smallest element $ h\in H$ with $h\neq 0$ is called the {\em multiplicity} of $H$ which is denoted by $e(H)$.

Note that $\overline{R}$ is a finitely generated $R$-module of rank one, and hence a fractionary ideal  containing $R$. This implies that $C=R:\overline{R}$ is an ideal in $R$, which is called the {\em conductor } of $R$. We first recall  a few well-known facts.

\begin{Lemma}
\label{largest}
The conductor $C$ is the largest ideal in $R$ such that $C\overline{R}=C$.
\end{Lemma}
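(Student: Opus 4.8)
The plan is to verify the two assertions separately, working directly from the definition $C = R:\overline{R} = \{x \in Q(R) : x\overline{R} \subseteq R\}$ and using only formal manipulations with colon ideals; I do not expect a serious obstacle, since the statement is essentially a bookkeeping fact about the conductor.

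First I would show $C\overline{R} = C$. The inclusion $C \subseteq C\overline{R}$ is immediate since $1 \in \overline{R}$. For the reverse inclusion, note that $C\overline{R}$ is a finitely generated $R$-submodule of $Q(R)$ --- as $C$ is an ideal of the Noetherian ring $R$ and $\overline{R}$ is a finite $R$-module --- and that $C\overline{R} \subseteq R$: indeed, for every $x \in C$ we have $x\overline{R} \subseteq R$ by the definition of $C$, so each generator $xy$ of $C\overline{R}$ (with $x\in C$, $y\in\overline{R}$) already lies in $R$. Since $\overline{R}$ is a ring, $\overline{R}\,\overline{R} = \overline{R}$, and hence $(C\overline{R})\overline{R} = C\overline{R} \subseteq R$. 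This says precisely that every element of $C\overline{R}$ multiplies $\overline{R}$ into $R$, i.e. $C\overline{R} \subseteq R:\overline{R} = C$. Therefore $C\overline{R} = C$, so $C$ is itself one of the ideals under consideration.

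Next I would prove maximality. Let $J \subseteq R$ be any ideal with $J\overline{R} = J$. For $x \in J$ and $y \in \overline{R}$ we then have $xy \in J\overline{R} = J \subseteq R$, so $x\overline{R} \subseteq R$, which means $x \in R:\overline{R} = C$. Hence $J \subseteq C$, and combined with the previous paragraph this shows that $C$ is the largest ideal of $R$ with $C\overline{R} = C$. The only point that requires a moment of care is the well-definedness of $C\overline{R}$ as a fractionary ideal (so that the manipulations above take place inside $Q(R)$), but this is guaranteed by the standing hypothesis that $\overline{R}$ is a finite $R$-module.
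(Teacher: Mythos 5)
Your proof is correct and follows essentially the same route as the paper: you show $C\overline{R}\subseteq C$ by checking that products $xy$ with $x\in C$, $y\in\overline{R}$ still multiply $\overline{R}$ into $R$ (the paper writes this as $(fg)\overline{R}=f(g\overline{R})\subseteq f\overline{R}\subseteq R$), and the maximality argument is identical. No issues.
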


\begin{proof}
We first show that $C\overline{R}=C$. Indeed, let $f\in C$ and $g\in \overline{R}$. Then
\[
(fg)\overline{R}= f(g\overline{R})\subseteq f\overline{R}\subseteq R.
\]
This shows that $C\overline{R}\subseteq C$. The converse direction is trivial.
Now let $D\subseteq R$ be  an ideal such that $D\overline{R}=D$. Let $f\in D$. Then $f\overline{R}\subseteq D\overline{R}=D\subseteq R$. Therefore, $f\in R: \overline{R}=C$. This shows that $ D\subseteq C$.
\end{proof}

\begin{Proposition}
\label{conductor}
Let $I\neq 0$ be a trace ideal of $R$. Then $C\subseteq I\subseteq R$.
\end{Proposition}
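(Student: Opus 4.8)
The plan is to derive everything from the identity $\tr(I)=II^{-1}$. Since $R$ is a one-dimensional Cohen--Macaulay domain and $I\neq 0$, we have $\grade(I)\geq 1$, so (\ref{traceofideal}) gives $\tr(I)=II^{-1}$; and $I$ being a trace ideal means $I=\tr(I)$ by Proposition~\ref{doormakesproblems}. The inclusion $I\subseteq R$ holds by the very definition of a trace ideal, so the whole point is to prove $C\subseteq I$. In fact I would prove the slightly more general statement that $C\subseteq \tr(I)$ for \emph{every} nonzero ideal $I$ of $R$; the proposition then follows at once, since $\tr(I)=I$ when $I$ is a trace ideal.

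To prove $C\subseteq \tr(I)=II^{-1}$, first I would pick a nonzero element $y\in I$ whose value $a=v(y)$ is minimal among the values of the nonzero elements of $I$; such a $y$ exists because $v(I)\subseteq\ZZ$ is bounded below. The key observation is that $y^{-1}I\subseteq\overline{R}$: for every nonzero $x\in I$ one has $v(y^{-1}x)=v(x)-a\geq 0$, and $\overline{R}$ is precisely the valuation ring of $v$. Note also that $y^{-1}I\supseteq R$, since $1=y^{-1}y\in y^{-1}I$.

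Next I would check that $Cy^{-1}\subseteq I^{-1}$. Indeed, for $c\in C$ we have $c\overline{R}\subseteq R$ by definition of the conductor, so $(cy^{-1})I=c\,(y^{-1}I)\subseteq c\overline{R}\subseteq R$, which says exactly that $cy^{-1}\in I^{-1}$. Putting the pieces together, $\tr(I)=II^{-1}\supseteq I\cdot(Cy^{-1})=(y^{-1}I)\,C\supseteq R\cdot C=C$, as wanted. (Alternatively one can phrase the middle step through Lemma~\ref{largest}, writing $C(y^{-1}I)\subseteq C\overline{R}=C$.)

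I do not anticipate a real obstacle here. The one place that requires attention is the choice of $y$: for an arbitrary nonzero $y\in I$ the containment $y^{-1}I\subseteq\overline{R}$ can fail, and then the chain of inclusions above collapses; once $y$ is chosen with minimal value, everything is a formal manipulation with fractionary ideals. It is worth noting that a more structural attempt---regarding $I^{-1}$ (which for a trace ideal is the ring $I:I$) and using its conductor---only places $C$ inside the reflexive hull $(I^{-1})^{-1}$ of $I$ rather than inside $I$ itself, which is why I would avoid that route.
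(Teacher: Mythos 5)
Your proof is correct and is essentially the paper's own argument: the element $f\in I$ with $I\overline{R}=f\overline{R}$ that the paper chooses is exactly your element $y$ of minimal value, and the paper's chain $(g/f)I\subseteq (g/f)I\overline{R}=g\overline{R}\subseteq R$ is your computation $(cy^{-1})I=c(y^{-1}I)\subseteq c\overline{R}\subseteq R$, after which $g=f\cdot(g/f)\in II^{-1}=I$ gives $C\subseteq I$.
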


\begin{proof}
It is clear that trace ideals are ideals in $R$.  Next we show that $C\subseteq I$.  We have  $I=II^{-1}$ because  $I$ is a trace ideal.  There exists $f\in I$ such that $I\overline{R}=f\overline{R}$.  Now let $g\in C$. Then
 \[
 (g/f)I\subseteq (g/f)I\overline{R}=(g/f)f\overline{R}=g\overline{R} \subseteq C\subseteq R.
 \]
This shows that $g/f\in I^{-1}$. Hence $g=f(g/f)\in II^{-1}=I$, and so $C\subseteq I$.
\end{proof}

We denote by $\nn$ the maximal ideal of $\overline{R}$. Then $\nn=(t)$ for some $t\in \overline{R}$. Note that $\mm=R\sect \nn$ because $R\subseteq \overline{R}$ is an integral extension. Since the kernel of the map  $R\to \overline{R}/\nn$ is $R\sect \nn$ it follows that the induced map $R/\mm\to R/\nn$  is injective. For the rest of this paper we assume that $R/\mm\to R/\nn$ is an isomorphism. This is equivalent to saying that for each $g\in \overline{R}$,  there exists $f\in R$ such that $f-g\in\nn$.

\begin{Lemma}
\label{angelcomes}
Let $C=t^c\overline{R}$.  Then $t^{c-1}\not\in R$.
\end{Lemma}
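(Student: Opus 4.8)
The claim is that if $C = t^c\overline{R}$ is the conductor, then $t^{c-1} \notin R$. The plan is to argue by contradiction: assume $t^{c-1} \in R$ and show that this forces the conductor to be strictly larger than $t^c\overline{R}$, contradicting the definition of $c$. The natural tool is Lemma~\ref{largest}, which characterizes $C$ as the largest ideal $D \subseteq R$ with $D\overline{R} = D$; equivalently, $C$ is the largest $\overline{R}$-submodule of $Q(R)$ that happens to be contained in $R$. So it suffices to produce an $\overline{R}$-submodule of $R$ that properly contains $t^c\overline{R}$.

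**Main step.** Suppose $t^{c-1} \in R$. Consider the $\overline{R}$-module $D = t^{c-1}\overline{R} = \nn^{c-1}$, which consists of all elements of $Q(R)$ of valuation $\geq c-1$. I would first check that $D \subseteq R$: since $t^{c-1} \in R$ by assumption and $\overline{R}$ is integral over $R$, every element of $t^{c-1}\overline{R}$ is of the form $t^{c-1}g$ with $g \in \overline{R}$; but one needs $t^{c-1}g \in R$, which is not automatic just from $t^{c-1}\in R$. Here is where the residue field hypothesis $R/\mm \cong \overline{R}/\nn$ enters. Given $g \in \overline{R}$, by that hypothesis there is $f \in R$ with $g - f \in \nn$, i.e. $g - f = t h$ for some $h \in \overline{R}$; then $t^{c-1}g = t^{c-1}f + t^c h \in R$, since $t^{c-1}f \in R$ (as $t^{c-1}, f \in R$) and $t^c h \in t^c\overline{R} = C \subseteq R$. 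Thus indeed $D = t^{c-1}\overline{R} \subseteq R$, and clearly $D\overline{R} = D$, so by Lemma~\ref{largest} we get $D \subseteq C = t^c\overline{R}$. But $t^{c-1} \in D$ has valuation $c-1 < c$, whereas every nonzero element of $t^c\overline{R}$ has valuation $\geq c$. This contradiction shows $t^{c-1}\notin R$.

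**Anticipated obstacle.** The only genuinely delicate point is the verification that $t^{c-1}\overline{R} \subseteq R$, which is precisely the place the standing hypothesis $R/\mm \xrightarrow{\sim} \overline{R}/\nn$ is used in an essential way — without it, $t^{c-1}$ being in $R$ need not drag the whole ideal $t^{c-1}\overline{R}$ into $R$. Everything else (the valuation comparison, the invocation of Lemma~\ref{largest}) is routine. I would present the argument in exactly the order above: assume $t^{c-1}\in R$, use the residue-field isomorphism to show $t^{c-1}\overline{R}\subseteq R$, apply Lemma~\ref{largest} to conclude $t^{c-1}\overline{R}\subseteq t^c\overline{R}$, and then derive a contradiction by comparing valuations.
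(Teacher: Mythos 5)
Your proof is correct and follows essentially the same route as the paper's: assume $t^{c-1}\in R$, use the residue field isomorphism $R/\mm\iso\overline{R}/\nn$ to write $g=f+s$ with $f\in R$, $s\in\nn$, and thereby show that $D=t^{c-1}\overline{R}$ (which equals the paper's $t^{c-1}R+C$) is an ideal of $R$ stable under multiplication by $\overline{R}$, contradicting the maximality of $C$ from Lemma~\ref{largest}. The only cosmetic difference is that you conclude via $D\subseteq C$ and a valuation comparison, while the paper phrases the contradiction as $C\subsetneq D$ directly.
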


\begin{proof}
Suppose $t^{c-1}\in R$. Then  $D=t^{c-1}R+C$ is an ideal in $R$.  We  claim that $D\overline{R} =D$. Indeed, let $h\in D$ and $g\in \overline{R}$. Since  $gC\subsetneq C$, it suffices to show that  $gt^{c-1}\in D$. There exists $f\in R$ such that $g=f+s$ with $s\in \nn$. Then
$t^{c-1}g =t^{c-1}f+t^{c-1}s$. Since $t^{c-1}f\in t^{c-1}R$ and $t^{c-1}s\in t^c\overline{R}=C$,  it follows that $hg\in D$. This proves the claim. Since $C\subsetneq D$ and $D\overline{R} =D$, this contradicts Lemma~\ref{largest}.
\end{proof}

As in Lemma~\ref{angelcomes}, let $C=t^c\overline{R}$ be the conductor of $R$ and let $H=v(R)$. Then $a\in H$ for all $a\geq c$ and $c-1\not \in H$. The elements in $\ZZ_{\geq 0}\setminus H$ are called the {\em gaps} of $H$. The largest gap of $H$ is $c-1$ and  it is called the {\em Frobenius number} of $H$. The number of gaps of $H$ is denoted by $g(H)$ and called the {\em genus} of $H$. The elements  $a\in H$ with $a<c-1$ are called the {\em non-gaps} of $H$. The number of non-gaps is denoted by $n(H)$. It is easy to see that $n(H)\leq g(H)$. If equality holds, then $H$ is called {\em symmetric}. Kunz showed \cite{Ku1} that $R$ is a Gorenstein ring if and only if $H=v(R)$ is symmetric.

\begin{Proposition}
\label{head}
Let  $I=fJ$ with $f\in\mm$ and $J\subseteq R$. Then $C\not\subseteq I$.
\end{Proposition}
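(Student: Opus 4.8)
The plan is to argue by contradiction, using the value semigroup. Suppose $C\subseteq I=fJ$. Write $C=t^c\overline{R}$ as in Lemma~\ref{angelcomes}; we may assume $c\geq 1$, since $c=0$ forces $R=\overline{R}$ and then $C=\overline{R}\not\subseteq\mm$, whereas $I=fJ\subseteq fR\subseteq\mm$, which is already absurd. Since $R$ is a domain and $C\neq(0)$ forces $I\neq(0)$, the element $f\in\mm$ is a non zero-divisor, and $I=fJ=\{fx:x\in J\}$, so every nonzero element of $I$ has the form $fx$ with $0\neq x\in J$.

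Now apply the valuation $v$ to $C\subseteq I$, so that $v(C)\subseteq v(I)$. Because $\overline{R}$ is a discrete valuation ring with maximal ideal $(t)$, one has $v(C)=v(t^c\overline{R})=\{n\in\ZZ:n\geq c\}$. Since $v$ is additive on $Q(R)\setminus\{0\}$ and $fJ$ consists exactly of the products $fx$ with $x\in J$ (no sums involved), we get $v(I)=v(f)+v(J)$, where $v(J)=\{v(x):0\neq x\in J\}$. Here $v(J)\subseteq v(R)=H$ because $J\subseteq R$, and $a:=v(f)\geq 1$ because $f\in\mm=R\cap\nn$. Hence $\{n:n\geq c\}\subseteq a+H$; taking $n=c-1+a\geq c$ yields $c-1\in H$. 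But $c-1\notin H$, since $t^{c-1}\notin R$ by Lemma~\ref{angelcomes} (equivalently, $c-1$ is the Frobenius number of $H$). This contradiction proves the claim. I do not expect a real obstacle in this route; the only point worth a sentence of justification is the equality $v(fJ)=v(f)+v(J)$, which is immediate once one notes $fJ=\{fx:x\in J\}$.

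In fact a shorter, valuation-free argument is available, and I would probably record that one instead. From $C\subseteq fJ\subseteq fR$ (using $J\subseteq R$) we obtain $f^{-1}C\subseteq R$. The set $f^{-1}C$ is then an ideal of $R$, and it satisfies $(f^{-1}C)\overline{R}=f^{-1}(C\overline{R})=f^{-1}C$ by Lemma~\ref{largest} ($C\overline{R}=C$). Since Lemma~\ref{largest} says $C$ is the \emph{largest} ideal with this property, $f^{-1}C\subseteq C$, i.e.\ $C\subseteq fC\subseteq\mm C$. Therefore $\mm C=C$, and Nakayama's lemma (or Krull's intersection theorem applied to the Noetherian local domain $R$) forces $C=(0)$, contradicting $C\neq(0)$. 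The advantage of this second proof is that it needs neither the case $c=0$ nor any combinatorics of $H$; the one thing to be careful about is legitimately dividing by $f$ in $Q(R)$, which is fine as $f$ is a non zero-divisor.
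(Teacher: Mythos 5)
Both of your arguments are correct. The first (valuation) argument is essentially the paper's proof in different clothing: the paper reduces to showing $C\not\subseteq(f)$ and exhibits the single element $ft^{c-1}$, which lies in $C$ because $v(ft^{c-1})=v(f)+(c-1)\geq c$, but not in $(f)$ because $t^{c-1}\notin R$ by Lemma~\ref{angelcomes}; your choice of the value $n=c-1+v(f)$ picks out exactly this element, rephrased through the value semigroup and the fact (stated in the paper right after Lemma~\ref{angelcomes}) that $c-1\notin H$. Your second argument is genuinely different from the paper's: from $C\subseteq fJ\subseteq (f)$ you form the ideal $f^{-1}C\subseteq R$, note that $(f^{-1}C)\overline{R}=f^{-1}(C\overline{R})=f^{-1}C$, invoke the \emph{maximality} clause of Lemma~\ref{largest} to conclude $f^{-1}C\subseteq C$, hence $C=fC=\mm C$, and finish with Nakayama (or Krull intersection). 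This route bypasses Lemma~\ref{angelcomes} and the combinatorics of $H$ altogether, using only the characterization of the conductor and the fact that $C\neq(0)$ (which holds since $\overline{R}$ is a finite $R$-module); it is close in spirit to the argument of Corollary~\ref{notallispossible}(b). Either proof is acceptable; the paper's one-line version is shorter once Lemma~\ref{angelcomes} is available, while yours is more self-contained.
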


\begin{proof}
We have $C=t^c\overline{R}$. Since $fJ\subseteq (f)$, it is enough to show that $C\not\subseteq (f)$. Indeed, $ft^{c-1}\notin (f)$ because of Lemma~\ref{angelcomes}.  But we have $ft^{c-1}\in C$ because $f\in \mm$.
\end{proof}

In Proposition~\ref{conductor} we have seen that $C\subseteq I\subseteq R$ if $I$ is a trace ideal.  The following  example shows that the converse of this statement is not true in general.
\begin{Example}
\label{angelhasmanyfriends}
{\em Let $R=K[| t^5,t^6,t^7|]$ and let $I= (t^6, t^{10}, t^{14})$. Then $$(t^{10},t^{11}, t^{12}, t^{13}, t^{14})=C\subseteq I\subseteq R.$$ But $I$ is not a trace ideal because $(t^6, t^{7}, t^{10})=\tr(I)\neq I$.

We also find that $I$ is not of the form $fJ$, see Proposition~\ref{head}.
 }
\end{Example}
Let $I\subsetneq R$ be an ideal. Then there exists $f\in I$ such that $I\overline{R}= f\overline{R}$. The ideal $I$ is integrally closed if and only if $I=f\overline{R}\sect R$.

\begin{Proposition}
\label{full}
Let $I$ be an integrally closed ideal of $R$ with $C\subseteq I$. Then $I$ is a trace ideal.
\end{Proposition}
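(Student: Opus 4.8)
The plan is to verify directly the criterion of Proposition~\ref{doormakesproblems}, namely that $\tr(I)=I$. Since $R$ is a domain and $(0)\neq C\subseteq I\subsetneq R$, the ideal $I$ is nonzero with $\grade(I)\geq 1$, so by (\ref{traceofideal}) one has $\tr(I)=II^{-1}$, and of course $I\subseteq\tr(I)\subseteq R$ holds in general. Hence everything comes down to establishing the reverse inclusion $II^{-1}\subseteq I$.

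To get a grip on $I^{-1}$, I would first write $I=f\overline{R}\sect R$ with $f\in I$ chosen so that $I\overline{R}=f\overline{R}$, which is possible because $I$ is integrally closed; in particular $I\subseteq f\overline{R}$. The key point will be that the hypothesis $C\subseteq I$ keeps $I^{-1}$ small. Concretely, $C\subseteq I$ yields $I^{-1}\subseteq C^{-1}$, and I claim $C^{-1}=\overline{R}$. The inclusion $\overline{R}\subseteq C^{-1}$ is clear since $\overline{R}\cdot C=C\subseteq R$. Conversely, if $y\in Q(R)$ satisfies $yC\subseteq R$, then $yC$ is an ideal of $R$ with $(yC)\overline{R}=y(C\overline{R})=yC$, so Lemma~\ref{largest} forces $yC\subseteq C$; writing $C=t^{c}\overline{R}$ and cancelling $t^{c}$ gives $y\overline{R}\subseteq\overline{R}$, i.e. $y\in\overline{R}$.

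With $I^{-1}\subseteq\overline{R}$ in hand the conclusion is immediate: $II^{-1}\subseteq(f\overline{R})\overline{R}=f\overline{R}$, and combining this with $II^{-1}\subseteq R$ gives $II^{-1}\subseteq f\overline{R}\sect R=I$. Therefore $\tr(I)=II^{-1}=I$, so $I$ is a trace ideal. I do not expect a genuine obstacle here: the one substantive ingredient is the identity $R:C=\overline{R}$, which comes straight out of Lemma~\ref{largest} together with $C=t^{c}\overline{R}$, and the real content of the argument is simply the observation that containing the conductor is exactly what pins $I^{-1}$ inside $\overline{R}$, after which $\tr(I)$ cannot escape the integrally closed ideal $f\overline{R}\sect R=I$.
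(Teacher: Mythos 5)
Your proof is correct, but it is genuinely different from the one in the paper. The paper argues by contradiction: assuming $II^{-1}\neq I$, it picks $g\in I^{-1}$ and $h\in I$ with $gh\notin I$, uses the valuation on $\overline{R}$ to deduce $1/g\in\nn$, and then manufactures the element $t^{c-1}=g\cdot t^{c-1}(1/g)\in II^{-1}\subseteq R$, contradicting Lemma~\ref{angelcomes}. Your argument is direct and, to my mind, cleaner: the single observation $I^{-1}\subseteq C^{-1}=R:C=\overline{R}$ (which you correctly extract from Lemma~\ref{largest} together with $C=t^c\overline{R}$; one can even shortcut it via $yC\subseteq R\Rightarrow yt^c\in R:\overline{R}=t^c\overline{R}$) immediately traps $II^{-1}$ inside $I\overline{R}\sect R=f\overline{R}\sect R=I$. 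What your route buys is twofold: it avoids Lemma~\ref{angelcomes} entirely, and hence does not use the standing hypothesis $R/\mm\iso\overline{R}/\nn$ on which that lemma depends; and it isolates the general principle that for \emph{any} ideal $I\supseteq C$ one has $I^{-1}\subseteq\overline{R}$ and therefore $\tr(I)=II^{-1}\subseteq I\overline{R}\sect R$, i.e.\ the trace of such an ideal is contained in its integral closure — integral closedness then finishes the job. The only points worth making explicit in a final write-up are that $I\neq(0)$ (so that $\grade(I)\geq 1$ and the identity $\tr(I)=II^{-1}$ applies) and that $I\subsetneq R$, both of which are harmless here.
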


\begin{proof}
Suppose  $I\subsetneq R$ is an integrally closed ideal  containg the conductor, but $I$ is not a trace ideal.  Then $I^{-1}I\neq I$.  Hence there exists $g\in I^{-1}$ such that $gI\subsetneq I$. Therefore there exists $h\in I$ such that $gh\not\in I$.  Since $I=f\overline{R}\sect R$ with $f\in I$, and $gh\in R$, it follows that $gh\not\in f\overline{R}$. This implies that $gh/f\not\in \overline{R}$. Since $\overline{R}$ is discrete valuation ring, we see that $f/gh\in \nn$. Now we use that $h\in I$ and therefore $h=fl$ with $l\in \overline{R}$. Then we conclude that $1/gl\in \nn$ which implies that $1/g\in \nn$, too. Hence $t^{c-1} (1/g)\in C\subseteq I$. It follows that $t^{c-1}= gt^{c-1} (1/g)\in I^{-1}I\subseteq R$, a contradiction to Lemma~\ref{angelcomes}.
\end{proof}

\begin{Corollary}
\label{khazaghestan}
Let $H$ be the value semigroup of $R$. Then the number of trace ideals is at least $n(H)+1$.
\end{Corollary}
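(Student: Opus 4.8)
The plan is to count the integrally closed ideals $I$ with $C\subseteq I\subseteq R$, since by Proposition~\ref{full} every such ideal is a trace ideal, and they are pairwise distinct as subsets of $R$, so their number is a lower bound for $|\Tc_R|$. First I would recall that an ideal $I\subsetneq R$ is integrally closed exactly when $I=f\overline{R}\sect R$ for some $f\in I$ with $I\overline{R}=f\overline{R}$; passing to valuations, $f\overline{R}\sect R=\{r\in R\: v(r)\geq v(f)\}$, so an integrally closed ideal is determined by the single value $v(f)$, and it contains $C=t^c\overline{R}$ precisely when $v(f)\leq c$. Thus the integrally closed ideals containing $C$ are exactly the ideals $I_a:=\{r\in R\: v(r)\geq a\}$ for those $a$ that are realized as $a=v(f)$ for some $f\in R$, i.e.\ for $a\in H=v(R)$, subject to $a\leq c$.

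Next I would check that distinct admissible values $a$ give distinct ideals and enumerate them. If $a,b\in H$ with $a<b\leq c$, pick $r\in R$ with $v(r)=a$; then $r\in I_a\setminus I_b$, so $I_a\supsetneq I_b$, and in particular the ideals $I_a$ for $a\in H$, $a\leq c$, are pairwise distinct. Now the relevant values are the elements $a\in H$ with $a\leq c$. Recall that $c-1$ is the Frobenius number, so $c-1\notin H$, while every integer $\geq c$ lies in $H$. Hence the set $\{a\in H\: a\leq c\}$ consists of $a=c$ together with the non-gaps of $H$, i.e.\ the elements $a\in H$ with $a<c-1$; note $0\in H$ is counted among these. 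By definition this set has $n(H)+1$ elements. Therefore there are exactly $n(H)+1$ integrally closed ideals of $R$ containing the conductor, and each is a trace ideal by Proposition~\ref{full}, which gives $|\Tc_R|\geq n(H)+1$.

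The only point requiring a little care is the bookkeeping at the boundary value $a=c$: one must make sure that $c$ itself is counted (it is realized, since $c\in H$), that $c-1$ is \emph{not} counted (it is a gap, by Lemma~\ref{angelcomes}), and that the ideal $I_c$ is exactly the conductor $C$ itself, so it is legitimately among the ideals with $C\subseteq I\subseteq R$. There is also the trivial edge case $c=0$, where $C=R$ and $n(H)=0$, so the bound reads $|\Tc_R|\geq 1$, which holds since $(0)$ — or, if one prefers, any ideal — witnesses it; but since the Corollary is of interest precisely when $R$ is not a DVR, this degenerate case can be dismissed in a line. No genuine obstacle is expected; the content is entirely in translating ``integrally closed ideal containing $C$'' into ``value in $H$ that is $\leq c$'' via the valuation, which is standard for analytically irreducible one-dimensional local rings.
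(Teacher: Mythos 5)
Your proof is correct and takes essentially the same route as the paper: both arguments produce, for each non-gap $a$ of $H$, the integrally closed ideal $I_a=f\overline{R}\sect R$ with $v(f)=a$, invoke Proposition~\ref{full} to see it is a trace ideal, and add the conductor (which you identify as $I_c$) to reach the count $n(H)+1$. The only difference is cosmetic: you are a bit more explicit about why the $I_a$ are pairwise distinct and why $C$ is not among the $I_a$ for $a$ a non-gap.
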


\begin{proof}
Let $a$ be a non-gap of $H$. Then there exists an element  $f\in R$ with $v(f)=a$. It follows that $f\overline{R}=t^a\overline{R}$. The  integrally closed  ideal $I_a=f\overline{R}\cap R$ is a trace ideal. If $a'$ is a non-gap with $a'\neq a $, then $I_a\neq I_{a'}$. This shows that together with $C$,  which is also a trace ideal, we have at least $n(H)+1$ trace ideals.
\end{proof}

Let $I\subseteq R$ be a non-zero ideal. Then $I\subseteq (I^{-1})^{-1}$. $I$ is called reflexive if $I= (I^{-1})^{-1}$.

For the proof of next results  we need two lemmata.  The first  lemma can be found in \cite{Be}.

\begin{Lemma}
\label{moneyback} {\em (a)}  Let  $I\subseteq R$ be a non-zero ideal. Then $I^{-1}$ is reflexive.

{\em (b)} $R$ is Gorenstein if and only if all fractionary ideals of $R$ are reflexive.
\end{Lemma}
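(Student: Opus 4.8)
The plan is to treat the two parts separately: (a) is a purely formal manipulation valid over any domain, while (b) rests on playing the reflexivity hypothesis off against the duality properties of the canonical ideal $\omega_R$.

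\medskip
\emph{Part (a).} For every nonzero fractionary ideal $J$ one has $J\subseteq (J^{-1})^{-1}$, and the operation $J\mapsto J^{-1}$ reverses inclusions. I would use this twice. Put $J=I^{-1}$. First, $J\subseteq (J^{-1})^{-1}$ by the general fact. Conversely, applying that same fact to $I$ gives $I\subseteq (I^{-1})^{-1}=J^{-1}$, and inverting this inclusion yields $(J^{-1})^{-1}\subseteq I^{-1}=J$. Hence $J=(J^{-1})^{-1}$, i.e.\ $I^{-1}$ is reflexive. No hypothesis on $R$ beyond being a domain enters here.

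\medskip
\emph{Part (b), the forward implication.} If $R$ is Gorenstein then $\omega_R\iso R$, so $R$ itself is a canonical ideal of $R$. The canonical duality for the one-dimensional Cohen--Macaulay ring $R$, namely $I=\omega_R:(\omega_R:I)$ for every regular fractionary ideal $I$ (see \cite[Satz 6.1]{HK1} or \cite[Theorem 3.3.10]{BH}), specializes, with $\omega_R$ replaced by $R$, to $I=(I^{-1})^{-1}$. Since every nonzero fractionary ideal of the domain $R$ is regular and isomorphic (as an $R$-module) to an ideal of $R$, and reflexivity is an isomorphism invariant of fractionary ideals, all fractionary ideals of $R$ are reflexive.

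\medskip
\emph{Part (b), the reverse implication.} Now assume every fractionary ideal of $R$ is reflexive. As $R$ is a one-dimensional Cohen--Macaulay domain with finite normalization, it is generically Gorenstein and hence possesses a canonical ideal $\omega$, for which $\omega:\omega=R$ (again \cite[Satz 6.1]{HK1}). Consider the ideal $J=\omega\omega^{-1}=\tr(\omega)\subseteq R$. Using the elementary identity $A:(BC)=(A:B):C$ for fractionary ideals together with the reflexivity of $\omega$ (the hypothesis, applied to the fractionary ideal $\omega$), I compute
\[
J^{-1}=R:(\omega\omega^{-1})=(R:\omega^{-1}):\omega=(\omega^{-1})^{-1}:\omega=\omega:\omega=R .
\]
Applying the hypothesis once more, this time to the ideal $J$, gives $J=(J^{-1})^{-1}=R^{-1}=R$. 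Thus $\omega\omega^{-1}=R$, so $\omega$ is an invertible fractionary ideal over the local ring $R$ and is therefore principal; hence $\omega\iso R$ and $R$ is Gorenstein. (Alternatively one may invoke \cite[Korollar 7.33]{HK1} at this last step.)

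\medskip
\emph{Main obstacle.} Parts (a) and the forward half of (b) are routine once the canonical duality $I=\omega_R:(\omega_R:I)$ is granted. The crux is the reverse implication of (b): the point requiring care is to isolate the \emph{right} instances of the reflexivity hypothesis --- here precisely the two fractionary ideals $\omega$ and $\omega\omega^{-1}$ --- and to check that the colon identities $R:(\omega\omega^{-1})=(R:\omega^{-1}):\omega$ and $(\omega^{-1})^{-1}=\omega$ combine to collapse $\tr(\omega)$ to $R$. One should also verify that $R$ genuinely admits a canonical ideal under the standing assumptions, which is where analytic irreducibility and the finiteness of $\overline{R}$ are used.
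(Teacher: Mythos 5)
Your argument is correct, but it is worth noting that the paper itself offers no proof of this lemma at all: it simply cites Berger's 1962 paper, whose original argument (predating the Herzog--Kunz canonical-module machinery) runs along quite different, length-theoretic lines. What you supply is therefore a genuine proof where the paper defers to the literature. Part (a) is the standard ``triple dual equals dual'' computation and is unimpeachable. For (b), the forward direction is exactly the specialization of the duality $\mathfrak{a}=\omega_R:(\omega_R:\mathfrak{a})$ to $\omega_R=R$, and your reverse direction --- extracting $(\omega^{-1})^{-1}=\omega$ and the reflexivity of $\operatorname{tr}(\omega)=\omega\omega^{-1}$ from the hypothesis, collapsing $R:(\omega\omega^{-1})$ to $\omega:\omega=R$ via $A:(BC)=(A:B):C$, and concluding invertibility hence principality of $\omega$ over the local ring --- is sound; it is in fact the same circle of ideas the authors use later in their Proposition on $\omega_R\iso\tr(\omega_R)$, where they run the colon computation in the other order and finish with \cite[Korollar 7.33]{HK1}. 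You are also right to flag the existence of the canonical ideal as the one point where the standing hypotheses matter: since $\widehat{R}$ is a domain under the analytic irreducibility assumption, its total quotient ring is a field, and the Herzog--Kunz criterion then guarantees that the one-dimensional Cohen--Macaulay ring $R$ admits a canonical ideal with $\omega:\omega=R$. The only cosmetic remark is that in the forward direction the detour through ``isomorphic to an ideal of $R$'' is unnecessary, since the duality is stated directly for fractionary ideals.
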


It follows from Lemma~\ref{moneyback}(a) that $(I^{-1})^{-1}$ is reflexive. In fact, $(I^{-1})^{-1}$ is a smallest reflexive ideal containing $I$. Therefore,  $(I^{-1})^{-1}$ is called the {\em reflexive hull} of $I$.
Furthermore,  Lemma~\ref{moneyback}(b) implies that   if  $I$ and $J$ are fractionary ideals in a Gorenstein ring,  then $I=J$ if and only if $I^{-1}=J^{-1}$.

\medskip

Let $I$ be a fractionary ideal of $R$. Then $\Hom_R(I,I)$ is the endomorphism ring of $I$ which we denote by  $\End_R(I)$. It follows that   $\End_R(I)=I:I$, because $\Hom_R(I,J)=J:I$ for any two fractionary ideals $I$ and $J$ of $R$. We recall the following well-known facts.

\begin{Lemma}
\label{nicefriends}
Let $I$ be a fractionary ideal. Then
\begin{enumerate}
\item[(a)]  $R\subseteq \End_R (I)\subseteq \overline{R}$.
\item[(b)]  If $R'$ is a ring with $R\subseteq R' \subseteq \overline{R}$, then there exists a fractionary ideal $I$ such that $R'=\End_R(I)$.
\item[(c)]  If $R'$ is a ring with $R\subseteq R' \subseteq \overline{R}$, then $R'$ is a fractionary ideal and $R'=\End_R(R')$.
\end{enumerate}
\end{Lemma}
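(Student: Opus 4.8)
The plan is to verify the three parts of Lemma~\ref{nicefriends} in turn, using only the basic properties of fractionary ideals and the fact that $\overline{R}$ is the integral closure of $R$ in $Q(R)$.

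For part (a), I would argue as follows. Since $I$ is a fractionary ideal, $R\cdot I\subseteq I$, so $R\subseteq I:I=\End_R(I)$. For the other inclusion, let $\varphi\in\End_R(I)$, so $\varphi I\subseteq I$. Iterating gives $\varphi^n I\subseteq I$ for all $n$, hence $R[\varphi]\cdot I\subseteq I$, which means $I$ is a faithful module over the subring $R[\varphi]\subseteq Q(R)$; since $I$ is a finitely generated $R$-module (being fractionary) and $R[\varphi]$ sits between $R$ and $Q(R)$, the module $I$ is finitely generated over $R[\varphi]$ as well. The determinant trick then shows that $\varphi$ is integral over $R$, so $\varphi\in\overline{R}$. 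Thus $\End_R(I)\subseteq\overline{R}$.

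For part (b), given a ring $R'$ with $R\subseteq R'\subseteq\overline{R}$, the point is that $R'$ is a finitely generated $R$-module: it sits inside the finite $R$-module $\overline{R}$, and $R$ is Noetherian, so $R'$ is finitely generated over $R$. Being a finitely generated $R$-submodule of $Q(R)$, $R'$ is a fractionary ideal. Then I would show $R'=\End_R(R')$: since $R'$ is a ring, $R'\cdot R'\subseteq R'$, giving $R'\subseteq R':R'=\End_R(R')$; conversely, if $\varphi R'\subseteq R'$, then since $1\in R'$ we get $\varphi=\varphi\cdot 1\in R'$, so $\End_R(R')\subseteq R'$. Taking $I=R'$ proves (b). Part (c) is then exactly the two facts just established, so it can be stated as an immediate consequence of the argument for (b).

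The main obstacle, such as it is, is the determinant-trick step in part (a): one must be a little careful that $I\neq 0$ and contains a non zero-divisor (which holds since fractionary ideals here are submodules of $Q(R)$ isomorphic to genuine ideals of the domain $R$), so that the faithfulness needed to conclude integrality of $\varphi$ is genuinely available. Everything else is formal manipulation with the identities $\Hom_R(I,J)=J:I$ and the Noetherian finiteness of submodules of $\overline{R}$. Since these are described as well-known facts, I would keep the write-up brief and cite the Krull–Akizuki / integral-closure background (e.g.\ \cite{HK}) rather than reproving the determinant trick in full.
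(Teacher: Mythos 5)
Your proof is correct, and parts (b) and (c) follow the same route as the paper: you observe that $R'$ is an $R$-submodule of the finite $R$-module $\overline{R}$, hence finitely generated since $R$ is Noetherian, and the two inclusions $R'\subseteq\End_R(R')$ (from $R'R'\subseteq R'$) and $\End_R(R')\subseteq R'$ (from $f=f\cdot 1$) are exactly the paper's; like the paper, you then obtain (b) from (c) by taking $I=R'$. Where you genuinely diverge is in the second inclusion of part (a). You invoke the determinant trick: $\varphi I\subseteq I$ with $I$ a nonzero finitely generated faithful module over $R[\varphi]\subseteq Q(R)$ forces $\varphi$ to be integral over $R$, hence $\varphi\in\overline{R}$. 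The paper instead exploits the standing hypothesis that $\overline{R}$ is a discrete valuation ring: from $fI\subseteq I$ it passes to $fI\overline{R}\subseteq I\overline{R}$, notes that $I\overline{R}=t^a\overline{R}$ is principal, cancels $t^a$ to get $f\overline{R}\subseteq\overline{R}$, and concludes $f\in\overline{R}$. Your argument is more general (it works for any nonzero finitely generated $R$-submodule of the quotient field of a Noetherian domain, in any dimension, without reference to the valuation), while the paper's is shorter given the one-dimensional analytically irreducible setup and avoids Cayley--Hamilton entirely. Your caution about faithfulness is well placed and is exactly the point that makes the determinant trick legitimate here; since $R$ is a domain and $I\neq 0$, faithfulness over $R[\varphi]$ is automatic.
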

\begin{proof}

(a) Since $I$ is a fractionary ideal, it follows that $RI\subseteq I$. This shows that $R\subseteq \End_R(I)$. Now let $f\in \End_R(I)$, then $fI\overline{R}\subseteq I\overline{R}$. Since $I\overline{R}= (t^a)\overline{R}$ for some $a$, it follows that  $ft^a\overline{R}\subseteq t^a\overline{R}$, which implies that $f\overline{R}\subseteq \overline{R}$. Since $1\in \overline{R}$, we see that  $f=f1\in \overline{R}$.

(c) \implies (b) is clear.  For the proof of (c) note that $R'$ is a fractionary ideal, because $RR'\subseteq R'$. Since $R'$ is a ring, we have $R'R'=R'$, and hence $R'\subseteq \End_R(R')$. Conversely, let $f\in \End_R(R')$.  Then $f=f1\in R'$. This shows that $\End_R(R')\subseteq R'$.
\end{proof}

 Let $\mathcal{F}_R$ be the set of non-zero  fractionary ideals of $R$. We define  subsets of $\mathcal{F}_R$ as follows:  $\Cc_R=\{ I\in\mathcal{F}_R\: \; C\subseteq I\subseteq R\}$, $\Tc_R=\{I\in\mathcal{F}_R\: \; \text{$I$ is a trace ideal of $R$}\}$,  and $\Oc_R=\{ I\in\mathcal{F}_R\: \; \text{$I$ is an overring of $R$}\}$.

For the proof of the main result of this section we need the following fact, see  \cite[Corollary 2.8]{GIK}.
\begin{Proposition}
\label{gorensteinisdifficult}
Let $R$ be Gorenstein, and let $\alpha\:\Tc_R\to \mathcal{F}_R$ be the map with $I\mapsto I^{-1}$. Then $\Im\alpha\subseteq \Oc_R$ and  $\alpha\: \Tc_R\to \Oc_R$ is bijective.
\end{Proposition}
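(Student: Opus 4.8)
The plan is to prove the two assertions of Proposition~\ref{gorensteinisdifficult} in turn: first that $\Im\alpha\subseteq\Oc_R$, then that $\alpha\colon\Tc_R\to\Oc_R$ is a bijection. For the first assertion, let $I\in\Tc_R$, so $I=\tr(I)=II^{-1}$ by Proposition~\ref{doormakesproblems} and identity~(\ref{traceofideal}). I want to show that $I^{-1}$ is a ring. It clearly contains $R$ and is closed under addition, so the only point is multiplicative closure: if $f,g\in I^{-1}$, then $fgI=f(gI)\subseteq f(II^{-1}I)$; using $II^{-1}=I$ we get $gI\subseteq I$, hence $fgI\subseteq fI\subseteq R$, so $fg\in I^{-1}$. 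Thus $I^{-1}$ is an overring of $R$, and by Lemma~\ref{nicefriends}(a) applied to the fractionary ideal $I^{-1}$ (or directly, since $I^{-1}\subseteq\overline{R}$ follows from $C\subseteq I$ as in Proposition~\ref{conductor}), $I^{-1}\in\Oc_R$. Note this part does not even use the Gorenstein hypothesis.

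For injectivity of $\alpha$, suppose $I,J\in\Tc_R$ with $I^{-1}=J^{-1}$. Since $R$ is Gorenstein, Lemma~\ref{moneyback}(b) says every fractionary ideal is reflexive, so $I=(I^{-1})^{-1}=(J^{-1})^{-1}=J$. For surjectivity, let $R'\in\Oc_R$ be an overring of $R$. By Lemma~\ref{nicefriends}(c), $R'=\End_R(R')=R':R'$. Set $I=(R')^{-1}=R:R'$. I claim $I\in\Tc_R$ and $\alpha(I)=R'$. First, $I$ is a nonzero ideal of $R$: it is nonzero because $R'$ is a finite $R$-module (being contained in $\overline{R}$, which is finite over $R$), so some nonzero $d\in R$ satisfies $dR'\subseteq R$; and $I\subseteq R$ since $1\in R'$. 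Next, by Lemma~\ref{moneyback}(a), $I^{-1}=((R')^{-1})^{-1}$ is reflexive, and since $R'$ is itself reflexive (it is a ring, hence by Lemma~\ref{moneyback}(b) in the Gorenstein case every fractionary ideal is reflexive — or one can argue $R'=R':R'\subseteq(R':R') $ forces reflexivity), we get $I^{-1}=(R')^{-1-1}$... more carefully: $I^{-1}=(R:R')^{-1}$, and reflexivity of $R'$ gives $(R:(R:R'))=R'$, so $I^{-1}=R'$. Then $II^{-1}=(R:R')R'$. I must check this equals $I$. We have $(R:R')R'\subseteq R:R'=I$ because $R'$ is a ring and multiplying $R:R'$ by elements of $R'$ keeps it inside $R:R'$ (if $f\in R:R'$ and $r\in R'$ then $frR'=f(rR')\subseteq fR'\subseteq R$); conversely $I=IR\subseteq IR'=(R:R')R'$ since $1\in R'$. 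Hence $II^{-1}=I$, so $I=\tr(I)$ is a trace ideal, i.e. $I\in\Tc_R$, and $\alpha(I)=I^{-1}=R'$.

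The main obstacle I anticipate is getting the reflexivity bookkeeping exactly right in the surjectivity step: one needs to know that an overring $R'\subseteq\overline{R}$ is reflexive as a fractionary ideal and that $R:(R:R')=R'$, and then that $(R:R')R'=R:R'$. All three facts are available: reflexivity of every fractionary ideal is Lemma~\ref{moneyback}(b) under the Gorenstein hypothesis, and the computation $(R:R')R'=R:R'$ follows purely formally from $R'$ being a ring, as sketched above. The Gorenstein hypothesis enters only through Lemma~\ref{moneyback}(b) (universal reflexivity), which simultaneously gives injectivity and the identification $I^{-1}=R'$ in surjectivity. I would present the argument in the order: (1) $\Im\alpha\subseteq\Oc_R$ via multiplicative closure of $I^{-1}$; (2) injectivity via reflexivity; (3) surjectivity by exhibiting $I=R:R'$ as the preimage of $R'$, checking $I$ is a nonzero proper ideal, that $I^{-1}=R'$, and that $II^{-1}=I$.
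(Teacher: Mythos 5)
Your proof is correct and follows essentially the same route as the paper: image in $\Oc_R$ via the identity $II^{-1}=I$, injectivity from reflexivity of fractionary ideals over a Gorenstein ring, and surjectivity by inverting the overring $R'$. The only (harmless) difference is in the details: the paper gets $\Im\alpha\subseteq\Oc_R$ from the colon identity $(II^{-1})^{-1}=I^{-1}:I^{-1}=\End_R(I^{-1})$ and, for surjectivity, uses that $II^{-1}=\tr(I)$ is automatically a trace ideal with inverse $R'$, whereas you verify directly that $I^{-1}$ is multiplicatively closed and that $(R:R')R'=R:R'$, which amounts to the same computation.
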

\begin{proof}
Since $I\in \Tc_R$, we have $I=II^{-1}$. Therefore, $\alpha(I)=I^{-1}=(II^{-1})^{-1}=I^{-1}:I^{-1}$. By Lemma~\ref{nicefriends}(a),  $I^{-1}:I^{-1}\in \Oc_R$.

$\alpha$ is injective:  Suppose $I^{-1}=J^{-1}$. Since $R$ is Gorenstein, $I=(I^{-1})^{-1}=(J^{-1})^{-1}=J$, see Lemma~\ref{moneyback}(b).

$\alpha$ is surjective: Let $R'\in \Oc_R$. Then $R'=L$, where $L$ is a fractionary ideal and $R'=L:L$, see Lemma~\ref{nicefriends}(b). Let $I=L^{-1}$. Then $I^{-1}=(L^{-1})^{-1}=L$ and $R'=I^{-1}:I^{-1}$. Since $(II^{-1})^{-1}=I^{-1}:I^{-1}=R'$ and since $II^{-1}\in \Tc_R$, the assertion  follows.
\end{proof}

In general,  not all trace ideals are reflexive.

\begin{Examples}{\em
(a) Let $R=K[| t^{14},t^{15},t^{20},t^{21},t^{25} |]\subsetneq \overline{R}=K[|t|]$. Then $I=(t^{21},t^{28},t^{29},t^{30},t^{34})$ is a trace ideal which is not reflexive.  Indeed,  $$(I^{-1})^{-1}=(t^{21},t^{28},t^{29},t^{30},t^{34},t^{39},t^{40}).$$

(b) Let $R=K[| t^3,t^4,t^5|]\subsetneq \overline{R}=K[|t|]$.  In \cite[Example 3.4(2)]{GIK}, it is  shown  that $\alpha:\Tc_R\to \Oc_R$ is not bijective. \cite[Proposition 5.1(3),(4)]{GIK} shows that $\alpha$ may be bijective, even when $R$ is not Gorenstein.}
\end{Examples}

For Gorenstein rings the  next result provides an efficient method to compute the largest trace ideal contained in a given ideal.

\begin{Corollary}
\label{highenough}
Let $R$ be Gorenstein and  $I\in \Cc_R$.  Then  there exists an integer $k_0$ such that $(I^{-1})^{k_0} =(I^{-1})^{k}$ for all $k\geq k_0$. Let $J=(I^{-1})^{k_0}$.  Then  $\subtr(I)=J^{-1}$.
\end{Corollary}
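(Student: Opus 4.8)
The plan is to exploit the bijection $\alpha\colon \Tc_R\to \Oc_R$ from Proposition~\ref{gorensteinisdifficult} together with the descending chain condition on overrings of $R$ inside $\overline{R}$. First I would record the easy structural facts: for $I\in\Cc_R$ we have $\grade(I)>0$ (since $C\subseteq I$ and $C$ contains a non zero-divisor), so $I^{-1}$ makes sense, and $R\subseteq I^{-1}$ because $I\subseteq R$. Moreover $I^{-1}$ is a fractionary ideal with $I^{-1}\cdot I^{-1}\subseteq (I^{-1})^2$, and more generally the powers form an ascending chain $I^{-1}\subseteq (I^{-1})^2\subseteq\cdots$ of fractionary ideals all contained in $\overline{R}$ (here one uses Lemma~\ref{nicefriends}(a): since $R\subseteq I^{-1}$, the union $\bigcup_k (I^{-1})^k$ is a ring between $R$ and $\overline{R}$, hence a finite $R$-module, so the chain stabilizes). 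This gives the existence of $k_0$ with $(I^{-1})^{k_0}=(I^{-1})^k$ for all $k\geq k_0$; write $J=(I^{-1})^{k_0}$, which is then a ring, namely $J=\bigcup_k (I^{-1})^k\in\Oc_R$.

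Next I would show $J^{-1}$ is a trace ideal contained in $I$. That $J^{-1}\in\Tc_R$ is immediate from Proposition~\ref{gorensteinisdifficult}: $\alpha$ is a bijection $\Tc_R\to\Oc_R$ whose inverse sends $R'\mapsto (R')^{-1}$ (this is extracted from the surjectivity argument there, using that $R$ is Gorenstein so everything is reflexive, Lemma~\ref{moneyback}(b)); since $J\in\Oc_R$, we get $J^{-1}\in\Tc_R$. For the inclusion $J^{-1}\subseteq I$: since $I^{-1}\subseteq J$ we get $J^{-1}\subseteq (I^{-1})^{-1}=I^{-1}\cdot\!$— wait, rather $J^{-1}\subseteq (I^{-1})^{-1}=I$, the last equality because $R$ Gorenstein forces $I$ to be reflexive (Lemma~\ref{moneyback}(b)). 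So $J^{-1}\subseteq I$, hence $J^{-1}\subseteq \subtr(I)$.

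For the reverse inclusion $\subtr(I)\subseteq J^{-1}$, let $T$ be any trace ideal with $T\subseteq I$; I must show $T\subseteq J^{-1}$, equivalently (taking inverses, which is order-reversing and involutive on reflexive ideals, so on trace ideals and on overrings) $J\subseteq T^{-1}$. Now $T\subseteq I$ gives $I^{-1}\subseteq T^{-1}$, and $T^{-1}\in\Oc_R$ is a ring by Proposition~\ref{gorensteinisdifficult}, so $T^{-1}$ is closed under multiplication; hence $(I^{-1})^k\subseteq T^{-1}$ for every $k$, and in particular $J=(I^{-1})^{k_0}\subseteq T^{-1}$. Taking inverses once more, $T=(T^{-1})^{-1}\subseteq J^{-1}$. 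Applying this with $T=\subtr(I)$ finishes the proof; combined with $J^{-1}\subseteq\subtr(I)$ we conclude $\subtr(I)=J^{-1}$.

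\textbf{Main obstacle.} The routine points are all the "take inverses and reverse inclusions" manipulations, which are safe because $R$ is Gorenstein (reflexivity of every fractionary ideal, Lemma~\ref{moneyback}(b)). The one genuine step is justifying that the ascending chain $\{(I^{-1})^k\}$ stabilizes: the cleanest route is to observe $B:=\bigcup_k (I^{-1})^k$ is a subring of $\overline{R}$ containing $R$ — closure under multiplication is clear, and $B\subseteq\overline{R}$ since each $(I^{-1})^k$ lies in $\overline{R}$ (because $I^{-1}\subseteq (I^{-1}I)^{-1}(I)\subseteq\ldots$; more directly, $R\subseteq I^{-1}\subseteq \overline{R}$ fails in general, so instead use that $I^{-1}\subseteq \overline{I^{-1}}$ sits inside $\End_R(I^{-1}I)\subseteq\overline{R}$ via Lemma~\ref{nicefriends}(a), noting $II^{-1}$ is an ideal with positive grade). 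Since $\overline{R}$ is module-finite over the Noetherian ring $R$, $B$ is a finitely generated $R$-module, so the chain defining it is eventually constant. I expect verifying this boundedness inside $\overline{R}$ to be the only place requiring care; everything else is formal.
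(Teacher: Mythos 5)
Your argument is correct and is essentially the paper's own proof written out in full: the paper likewise identifies $\subtr(I)^{-1}$ as the smallest overring in $\Oc_R$ containing $I^{-1}$ via Proposition~\ref{gorensteinisdifficult}, observes that any such overring contains every power $(I^{-1})^k$, and lets the ascending chain of powers stabilize. The only point to tidy is your justification that the powers of $I^{-1}$ stay inside $\overline{R}$ (the place you flag as the main obstacle): this follows directly from $C\subseteq I$, which gives $I^{-1}\subseteq C^{-1}=\overline{R}$, rather than from Lemma~\ref{nicefriends}(a) or the endomorphism-ring detour you sketch, which as written is circular.
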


\begin{proof}
If follows from Proposition~\ref{gorensteinisdifficult} that $\subtr(I)^{-1}$ is the smallest overring in $\Oc_R$ which contains $I^{-1}$. It is clear that any overring containing $I^{-1}$ also contains $(I^{-1})^k$ for all $k\geq 0$. Since $I^{-1} \subseteq (I^{-1})^2\subseteq \ldots$  and since $R$ is Noetherian, this chain of inclusions stabilizes
 and  the desired result follows.
\end{proof}

\begin{Example}
{\em Let $R=K[|t^7,t^{10}|]$. Then $R$ is Gorenstein. The ideal  $I=(t^{14},t^{50})$  belongs to $\Cc_R$, but $I$ is not a trace ideal. Indeed, $\tr(I)=(t^{14}, t^{20})$. We have  $I^{-1}=(1,t^6)$, $(I^{-1})^2 =(1,t^6,t^{12})$ and $(I^{-1})^k = (I^{-1})^3=(1,t^6,t^{12}, t^{18})$ for all $k\geq 3$. Therefore, $\subtr(I)=((I^{-1})^3)^{-1}= (t^{38},t^{42}, t^{44}, t^{50})$. }
\end{Example}

In general $R$ may have  infinitely many trace  ideals. The following example is given in \cite[Example 3.4(1)]{GIK}:
Let $R=K[| t^4,t^5|]\subsetneq \overline{R}=K[|t|]$, and for $a\in K$ let $R_a=K[| t^4,t^5,t^6+at^7|]$. Note that $R$ is Gorenstein and  $R\subsetneq R_a\subsetneq \overline{R}=K[|t|]$.  It is  shown in \cite{GIK} that  $R_a\neq R_{a'}$ for $a\neq a'$. Therefore, Proposition~\ref{gorensteinisdifficult} implies that $|\Tc_R|=\infty$  if $|K|=\infty$.

Our next goal is to characterize all  analytically irreducible  Gorenstein rings $R$ for which  $\Tc_R$ is finite. First we observe
\begin{Lemma}
\label{earinlefthand}
Let $R_1,R_2\in \Oc_R$, and suppose that $R_1\subseteq R_2$. Then $R_1=R_2$ if and only if $v(R_1)=v(R_2)$.
\end{Lemma}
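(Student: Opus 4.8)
The plan is to dispatch the "only if" direction trivially ($R_1=R_2$ obviously forces $v(R_1)=v(R_2)$) and to prove the converse by a valuation-increasing argument that is forced to terminate by the conductor.

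For the converse, I would argue by contradiction: assume $R_1\subseteq R_2$, $v(R_1)=v(R_2)$, and yet $R_2\setminus R_1$ is nonempty. The first step is to observe that $R_2\setminus R_1$ can only contain elements of bounded valuation. Write the conductor of $R$ as $C=t^c\overline{R}$; then every $h\in\overline{R}$ with $v(h)\geq c$ lies in $C\subseteq R\subseteq R_1$, so every element of $R_2\setminus R_1$ has valuation at most $c-1$. Hence I may choose $g\in R_2\setminus R_1$ whose valuation $a:=v(g)$ is maximal among elements of $R_2\setminus R_1$ (note $g\neq 0$ since $0\in R_1$).

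The second step produces from $g$ a new element of $R_2\setminus R_1$ of strictly larger valuation, contradicting this maximality. Since $a=v(g)\in v(R_2)=v(R_1)$, there is $f\in R_1$ with $v(f)=a$. Then $g/f\in\overline{R}$ has valuation $0$, so its image in $\overline{R}/\nn$ is nonzero; using the standing hypothesis that $R/\mm\to\overline{R}/\nn$ is an isomorphism, there is an element $u\in R$ with $g/f-u\in\nn$, and $u$ is necessarily a unit of the local ring $R$ (its residue is nonzero). Setting $s=g/f-u\in\nn$, we get $g=uf+sf$ with $uf\in R_1$ (as $u\in R\subseteq R_1$ and $f\in R_1$) and $v(sf)=v(s)+a\geq a+1$. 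Since $g\notin R_1$ while $uf\in R_1$, the element $sf=g-uf$ lies in $R_2$ (both $g$ and $uf$ do) but not in $R_1$; thus $sf\in R_2\setminus R_1$ with $v(sf)>a$, contradicting maximality. Therefore $R_2\setminus R_1$ is empty, i.e. $R_1=R_2$.

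I do not expect a serious obstacle here. The only delicate point is the passage from $g$ to the corrected element $sf$ of higher valuation: this is exactly where the hypothesis $R/\mm\cong\overline{R}/\nn$ is used, and one must note that each correction strictly raises the valuation so that, bounded above by $c-1$, the process cannot continue indefinitely — equivalently, the maximality choice already yields the contradiction in one step.
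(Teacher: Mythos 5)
Your proof is correct and follows essentially the same route as the paper's: bound the valuations of elements of $R_2\setminus R_1$ by the conductor exponent, pick an element of maximal valuation, use the hypothesis $R/\mm\cong\overline{R}/\nn$ to subtract off a unit multiple of an element of $R_1$ with the same valuation, and derive a contradiction from the resulting element of strictly larger valuation. The only difference is cosmetic (you phrase the final step as producing a new element of $R_2\setminus R_1$ of larger valuation, while the paper concludes the corrected element lies in $R_1$ and hence so does the original).
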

\begin{proof}
It is enough to show that $R_1=R_2$ if  $v(R_1)=v(R_2)$. Let $t^c\overline{R}$ be the conductor of $R$.  If $f\in R_2$ and $v(f)\geq c$, then $f\in \overline{R}$ and hence $f\in R_1$.   Suppose $R_1\subsetneq R_2$. Then there exists $f\in R_2\setminus R_1$.  It follows that $v(f)< c$. We may choose $f\in R_2\setminus R_1$ with $v(f)$ maximal. Since $v(R_1)=v(R_2)$, we find $g\in R_1$ such that $v(f)=v(g)$. Then there exists a unit $\varepsilon\in \overline{R}$ such that $f=\varepsilon g$. Since $R/\mm=\overline{R}/\nn$, we have  $\eta+\mm=\varepsilon+\nn$ for some  $\eta\in R$.  This implies that $\varepsilon-\eta\in\nn$, and we get
\[
f-\eta g=f-\varepsilon g+(\varepsilon -\eta )g=(\varepsilon -\eta )g.
\]
Since $\varepsilon-\eta\in\nn$, it follows that $v(f-\eta g)=v((\varepsilon -\eta )g)>v(g)=v(f)$. Then $f-\eta g\in R_1$,  by the choice of $f$.  Since $\eta \in R\subsetneq R_1$ and $g\in R_1$, this implies that $f\in R_1$, a contradiction.
\end{proof}

\begin{Theorem}
\label{ProfessorKunzlikestohelpsus}
Assume that $|R/\mm|=\infty$. Let $H$ be the value semigroup of $R$. Then the following conditions are equivalent:
\begin{enumerate}
\item[(i)] $|\Oc_R|<\infty$.
\item[(ii)]$H=\langle 1\rangle$ or $H=\langle 2,a\rangle$ where $a>2$ is an odd number,  or $H$ is one of the semigroups: $\langle 3,4\rangle$,  $\langle 3,4, 5\rangle$, $\langle 3,5\rangle$ or $\langle 3,5,7\rangle$.
\end{enumerate}
\end{Theorem}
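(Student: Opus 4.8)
The plan is to isolate a combinatorial property of $H$ and show it characterises both sides. Call $H$ \emph{truncation-closed} if every numerical semigroup $H'$ with $H\subseteq H'\subseteq\ZZ_{\geq 0}$ has the form $H'=H\cup\{n\in\ZZ_{\geq 0}:n\geq c'\}$ for some $c'\geq 0$; a short check shows this fails precisely when there exist gaps $p<q$ of $H$ with $q\notin\langle H,p\rangle$. I would then prove: (a) if $H$ is truncation-closed then $|\Oc_R|<\infty$; (b) if $H$ is not truncation-closed then $|\Oc_R|=\infty$; (c) $H$ is truncation-closed if and only if $H$ occurs in (ii). Combining (a), (b), (c) gives (i)$\Leftrightarrow$(ii), and the hypothesis $|R/\mm|=\infty$ is used only in (b).

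For (a), let $R'\in\Oc_R$ and set $H'=v(R')$, so the conductor of $R'$ is $C_{R'}=t^{c'}\overline R$ with $c'$ the conductor degree of $H'$. Truncation-closedness gives $H'=H\cup\{n\geq c'\}$, hence $v(R+t^{c'}\overline R)=H'=v(R')$; since $R+t^{c'}\overline R\subseteq R'$, Lemma~\ref{earinlefthand} forces $R'=R+t^{c'}\overline R$. Thus $R'$ is determined by the single integer $c'$, and as $\ZZ_{\geq 0}\setminus H$ is finite only finitely many $c'$ occur, so $|\Oc_R|<\infty$.

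For (b), pick gaps $p<q$ of $H$ with $q\notin\langle H,p\rangle$; the analysis in (c) shows one may always take $(p,q)=(2,3)$ when $e(H)\geq 4$, and $(p,q)=(4,5)$ when $e(H)=3$ with $4,5\notin H$. For $a\in K$ put $R_a:=R[t^p+at^q]\in\Oc_R$. The heart of the matter is the claim that $q\notin v(R_a)$: granting it, $R_a=R_{a'}$ with $a\neq a'$ would give $t^q=(a-a')^{-1}\big((t^p+at^q)-(t^p+a't^q)\big)\in R_a$, which is impossible, so the $R_a$ ($a\in K$) are pairwise distinct and $|\Oc_R|=\infty$ as $|K|=\infty$. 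To see $q\notin v(R_a)$, write $x\in R_a$ as a finite sum $x=\sum_i r_i(t^p+at^q)^i$ with $r_i\in R$ (possible because $(t^p+at^q)^i\in t^{ip}\overline R\subseteq C\subseteq R$ once $ip\geq c$). If $v(x)=q$, then equating the coefficients of $t^0,t^1,\dots,t^q$ in $x$ to zero, one by one, and using $e(H)\geq 4$ (resp.\ $e(H)=3$ and $4,5\notin H$), forces first $r_0\in\mm$ and then $v(r_0)>q$ and $v(r_1)>q-p$; since also $v(r_i)+ip\geq 2p>q$ for $i\geq 2$, every summand has value $>q$, so the coefficient of $t^q$ in $x$ vanishes, contradicting $v(x)=q$.

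Part (c) is the bookkeeping. If $e(H)\geq 4$ then $2,3$ are gaps and $3\notin\langle H,2\rangle$ (as $H$ meets neither $\{1\}$ nor $\{3\}$); if $e(H)=3$ with $4,5\notin H$ then $4,5$ are gaps and $5\notin\langle H,4\rangle$; so these $H$ are not truncation-closed, and they are precisely the $H$ missing from (ii). Conversely $\langle 1\rangle$ has no gaps; for $H=\langle 2,a\rangle$ every intermediate numerical semigroup equals $\langle 2,m\rangle$ for some odd $m$ with $1\leq m\leq a$, and each such is visibly a conductor-truncation of $H$; for each of $\langle 3,4\rangle$, $\langle 3,4,5\rangle$, $\langle 3,5\rangle$, $\langle 3,5,7\rangle$ one lists the finitely many intermediate semigroups and checks each is a conductor-truncation — so every semigroup in (ii) is truncation-closed. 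The steps I expect to cost the most effort are this last verification, where the family $\langle 2,a\rangle$ has to be handled uniformly in $a$, and the claim $q\notin v(R_a)$ in (b): the latter must be carried out for an arbitrary local ring $R$ with $v(R)=H$, whose elements may have support outside $H$, so the argument has to be phrased entirely in terms of the orders $v(r_i)\in H$ rather than in terms of monomials.
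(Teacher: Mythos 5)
Your proposal is correct in substance, and for the hard implication it coincides with the paper's argument: the paper also produces, when $e(H)\geq 4$ (resp.\ $e(H)=3$ with $4,5\notin H$), the family $R[t^2+rt^3]$ (resp.\ $R[t^4+rt^5]$) indexed by residues $r$ and shows $t^3$ (resp.\ $t^5$) does not lie in these rings by exactly the kind of valuation bookkeeping you describe. Where you genuinely depart from the paper is in the finiteness direction: the paper verifies $|\Oc_R|<\infty$ semigroup by semigroup, identifying each proper overring as $R[t^b]$ for suitable $b$, whereas your ``truncation-closed'' reformulation together with the uniform identification $R'=R+t^{c'}\overline{R}$ (justified by $C_{R'}=t^{c'}\overline{R}\subseteq R'$ and Lemma~\ref{earinlefthand}) handles all the listed semigroups at once and makes transparent that the hypothesis $|R/\mm|=\infty$ enters only in the infinite-family construction; this is a cleaner packaging, though it rests on the same key Lemma~\ref{earinlefthand}. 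Two small points need attention when writing this up. First, $a$ should range over a system of representatives in $R$ of distinct residue classes of $R/\mm$ (so that $a-a'$ is a unit of $R$), not over an ambient field $K$, which the general hypotheses do not provide. Second, in the claim $q\notin v(R_a)$ your chain ``$r_0\in\mm$, hence $v(r_0)>q$, hence $v(r_1)>q-p$'' is literally valid only in the case $p=2,q=3$; for $p=4,q=5$ one must first exclude $v(r_0)=3$ and $v(r_1)=0$, and the mechanism that does this is that the three value sets $H$, $H+p$ and $[2p,\infty)$ governing the summands $r_0$, $r_1f$ and $r_if^i$ ($i\geq 2$) are pairwise disjoint below $q$ and none contains $q$, so no cancellation can occur at any value $\leq q$ and $v(x)$ is forced to equal the minimum of the summand values, which is never $q$. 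This is exactly the case split the paper carries out ($v(g_0)=3$ versus $v(g_0)>3$), so it should be made explicit rather than left inside ``equating coefficients one by one.''
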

\begin{proof}
(i)\implies (ii):  Suppose $H$ is not one of the semigroups in (ii). Then two cases are possible:

Case 1: $e(H)\geq 4$

Case 2: $e(H)=3$ and if $a$ is the smallest integer  in $H$ with $a>3$, then $a>4,5$.

 For $r\in R$ we set $\overline{r}=r+\mm$. Then $r$ is a unit in $R$ if and only if $\overline{r}\neq 0$. Since $| R/\mm|=\infty$,  for each  $i\geq 1$ there exists  $r_i\in R$ such that $\overline{r_i} \neq \overline{r_j} $ for $i\neq j$. Then $\overline{r_i-r_j}=\overline{r_i}-\overline{r_j}\neq 0$. Therefore $r_i-r_j$ is a unit in $R$ for all $i\neq j$.

Now we discuss Case 1:
 Since $e(H)\geq 4$, it follows that $2,3\not\in H$. Therefore by Lemma~\ref{earinlefthand}, $t^2,t^3\not\in R$. Now for each $i$, let  $f_i=t^2+r_it^3$ and $R_i=R[f_i]$. Then $R\subsetneq R_i\subsetneq\overline{R}$.

 We claim that $R_i\neq R_j$ for $i\neq j$. This will be a contradiction to $(i)$. Assume that  $R_i= R_j$ for some $i\neq j$. Then $f_j\in R_i$.  Therefore $f_i-f_j=(r_i-r_j)t^3$. Since $r_i-r_j$ is a unit, it follows that $t^3\in R_i$. Hence, there exists an integer $m\geq 1$ such that  $t^3=g_0+g_1f_i+g_2f_i^2+\cdots +g_mf_i^m$ with  $g_i\in R_i$.  So we may write  $t^3=g_0+g_1f_i+g$ with $g\in R_i$ and $v(g)\geq 4$. Suppose that $g_0\not\in\mm$ then $g_0\not\in\nn$. Therefore $g_0\not\in R_i\sect \nn=\mm_i$  which is the maximal ideal of $R_i$, and this is implies that  $g_0$ is a unit in $R_i$. Then $g_1f_i+g=t^3-g_0$ and  $t^3-g_0$ is a unit. This is a contradiction, because $v(g_1f_i+g)\geq \min\{v(g_1f_i), v(g)\}\geq 2$ and $v(t^3-g_0)=0$.  Hence $g_0\in \mm$, and then $v(g_0)\geq 4$.  Let  $t^3=g_1f_i+h $ where $h=g-g_0$.  Then $v(h)\geq4$. It follows that,  $v(g_1)+2=v(g_1f_i)= v(t^3-h)=3$. Therefore $v(g_1)=1$, a contradiction.

 For the proof of Case 2, we set $f_i=t^4+r_it^5$. Again we claim  that $R_i\neq R_j$ for $i\neq j$. Suppose that  $R_i= R_j$ for some $i\neq j$, then $f_j\in R_i$. Therefore, as in the Case 1, $t^5\in R_i$ and $t^5=g_0+g_1f_i+g$ where $g_0, g_1\in R$ and $g\in R_i$ with $v(g)\geq 8$. As in case 1, $g_0$ cannot be a unit. Therefore, $v(g_0)\geq 3$.  If $v(g_0)>3$ then $ v(g_0)\geq 6$ and $5=v(g_1f_i)=v(g_1)+4$. Hence $v(g_1)=1$, a contradiction. If $v(g_0)=3$, then it follows that $5=v(t^5)=v(g_0+g_1f_i+g)=3$,  again a contradiction.

 (ii)\implies(i): If $H=\langle 1\rangle$, then $R=\overline{R}$ and the assertion is trivial.  Next we consider the case $H=\langle 2,a\rangle$ where $a>2$ is an odd number. Let $R'\in \Oc_R$ with $R'\neq R$. Then  $R\subsetneq R'\subseteq \overline{R}$. By Lemma~\ref{earinlefthand} we have $v(R)\subsetneq v(R')$. Therefore,   we conclude that $v(R')=\langle 2,b\rangle$ where $b$ is odd and $b<a$. We claim that $R'=R[t^b]$. Suppose $t^b\not\in R'$. Then $R'\subsetneq R'[t^b]$. Again by using  Lemma~\ref{earinlefthand}, $v(R')\subsetneq v(R'[t^b])$. On the other hand, each element in $R'[t^b]\setminus R'$ has a value bigger than $b$ and hence $v(R'[t^b])=\langle 2,b\rangle=v(R')$, a contradiction. Thus $t^b\in R'$ which is implies that $R[t^b]\subsetneq R'$. Since $v(R[t^b])=v(R')$. Then by Lemma~\ref{earinlefthand}, $R'=R[t^b]$. Because $0<b<a$,  the numbers of $b$ is finite and $|\Oc_R|<\infty$.

For the remaining cases, it is enough to show that   $|\Oc_R|<\infty$, if $v(R)=\langle 3,4\rangle$ or  $v(R)=\langle 3,5\rangle$.  Because if $v(R)=\langle 3,4,5\rangle$ or  $v(R)=\langle 3,5,7\rangle$, then they are overrings of rings  whose value semigroup is $\langle 3,4\rangle$ or  $\langle 3,5\rangle$, and so  $|\Oc_R|<\infty$,  as well.  We indicate the arguments for the case $H=\langle 3,4\rangle$. Let $R'\in \Oc_R$. We may assume that $R\subsetneq R'\subsetneq \overline{R}$. Then $v(R)\subsetneq v(R')$. Therefore, $v(R')=\langle 2,3\rangle$ or $v(R')=\langle3,4,5\rangle$. In the first case suppose that $t^2\not\in R'$. Then $R'\subsetneq R'[t^2]$, and so $v(R')\subsetneq v(R'[t^2])$. On the other hand, $\langle 2,3\rangle\subsetneq v(R'[t^2])$,  and since $1\not \in v(R'[t^2])$, it follows that $v(R')= v(R'[t^2])$, a contradiction. Thus $t^2\in R'$ and hence $R[t^2]\subseteq R'$. Since
$v(R[t^2])=v(R')$ we conclude that $R'=R[t^2]$.
The same argument shows that if  $v(R')=\langle3,4,5\rangle$, then $R'=R[t^5]$.
\end{proof}
We call $R$ {\em small} if $R$ has only finitely many trace ideals. In the next result the  small Gorenstein rings are classified.

\begin{Corollary}
\label{ProfessorKunz}
Assume that $|R/\mm|=\infty$, $R$ is  Gorenstein and $H=v(R)$. Then the following conditions are equivalent:
\begin{enumerate}
\item[(i)] $R$ is small.
\item[(ii)] $H=\langle 1\rangle$ or $H=\langle 2,a\rangle$ where $a>2$ is an odd number, or $H=\langle 3,4\rangle$ or $H=\langle 3,5\rangle$.
\item[(iii)] All trace ideals are integrally closed.
\end{enumerate}
\end{Corollary}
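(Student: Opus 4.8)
The plan is to prove Corollary~\ref{ProfessorKunz} by combining the previously established structural results into a cycle of implications, using Proposition~\ref{gorensteinisdifficult} as the bridge between trace ideals and overrings. First I would prove (i)$\implies$(ii). By Proposition~\ref{gorensteinisdifficult}, the map $\alpha\colon \Tc_R\to \Oc_R$, $I\mapsto I^{-1}$, is bijective when $R$ is Gorenstein, so $R$ is small if and only if $|\Oc_R|<\infty$. Theorem~\ref{ProfessorKunzlikestohelpsus} then tells us that $|\Oc_R|<\infty$ forces $H$ to be one of the semigroups $\langle 1\rangle$, $\langle 2,a\rangle$ with $a>2$ odd, $\langle 3,4\rangle$, $\langle 3,4,5\rangle$, $\langle 3,5\rangle$, or $\langle 3,5,7\rangle$. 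Among these, $\langle 3,4,5\rangle$ and $\langle 3,5,7\rangle$ are not symmetric (their genus exceeds $n(H)$), so by Kunz's criterion the corresponding rings are not Gorenstein; hence for Gorenstein $R$ only the four semigroups listed in (ii) survive.

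Next I would prove (ii)$\implies$(iii). By Corollary~\ref{khazaghestan} there are at least $n(H)+1$ integrally closed trace ideals (the ideals $I_a=t^a\overline{R}\sect R$ for non-gaps $a$, together with $C$). The strategy is to show that for each $H$ in (ii) the number $|\Tc_R|$ equals exactly $n(H)+1$, which by the counting in Corollary~\ref{khazaghestan} forces every trace ideal to be integrally closed. Using $\alpha$, this amounts to checking $|\Oc_R|=n(H)+1$. For $H=\langle 1\rangle$ this is trivial ($R=\overline{R}$, one overring, $n(H)=0$). For $H=\langle 2,a\rangle$ the argument in the proof of Theorem~\ref{ProfessorKunzlikestohelpsus} shows the overrings are precisely $R[t^b]$ for odd $b$ with $0<b<a$ together with $\overline{R}$, and one counts these against $n(H)$; similarly for $\langle 3,4\rangle$ and $\langle 3,5\rangle$ the overrings were enumerated explicitly there. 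In each case the count matches, so all trace ideals are integrally closed.

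Finally, (iii)$\implies$(i) is immediate: by Proposition~\ref{full} and the discussion preceding Corollary~\ref{khazaghestan}, there are exactly $n(H)+1$ integrally closed ideals containing the conductor, so if every trace ideal is integrally closed then $|\Tc_R|\leq n(H)+1<\infty$, i.e.\ $R$ is small.

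The main obstacle will be the bookkeeping in (ii)$\implies$(iii): one must verify for each of the four families that the explicit list of overrings produced in the proof of Theorem~\ref{ProfessorKunzlikestohelpsus} has cardinality exactly $n(H)+1$, matching the lower bound from Corollary~\ref{khazaghestan}. The $\langle 2,a\rangle$ case requires noting that the non-gaps of $\langle 2,a\rangle$ are $0,2,4,\dots$ below $a-1$ together with the structure near the Frobenius number $a-2$, and reconciling this with the odd integers $b$ in the range $0<b<a$; this parity count is routine but must be done carefully. Everything else follows formally from the cited results, since the equivalence is really just the observation that the lower bound of Corollary~\ref{khazaghestan} is attained exactly in the small Gorenstein case.
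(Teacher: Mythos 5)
Your proposal is correct and follows essentially the same route as the paper: (i)$\iff$(ii) via Proposition~\ref{gorensteinisdifficult} and Theorem~\ref{ProfessorKunzlikestohelpsus} with the non-symmetric semigroups $\langle 3,4,5\rangle$ and $\langle 3,5,7\rangle$ excluded by Kunz's criterion, (ii)$\implies$(iii) by counting that $|\Tc_R|=|\Oc_R|$ equals exactly $n(H)+1$ so the lower bound of Corollary~\ref{khazaghestan} is attained, and (iii)$\implies$(i) by finiteness of the set of integrally closed ideals. The only cosmetic difference is that the paper packages the overring count uniformly as ``one proper overring per gap'' plus the symmetry $n(H)=g(H)$, whereas you propose the equivalent case-by-case bookkeeping.
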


\begin{proof}
(i)\iff (ii): Since $R$ is Gorenstein, Proposition~\ref{gorensteinisdifficult} implies that  $|\Oc_R |<\infty$, if and only if $R$ is small. Hence the desired conclusion follows from  Theorem~\ref{ProfessorKunzlikestohelpsus}, if we observe that $v(R)$ cannot be $\langle 3,4,5\rangle$ or  $\langle 3,5,7\rangle$,  since $R$ is Gorenstein.

(i), (ii)\implies (iii): We have seen  in the proof of Theorem~\ref{ProfessorKunzlikestohelpsus}  that for  each gap of $H$ there exists exactly one proper overring of $R$. Therefore, Proposition~\ref{gorensteinisdifficult} implies that,  together with $R$, we have precisely $g(H)+1$ trace ideals. Since $H$ is symmetric, $n(H)=g(H)$, and therefore, the number of trace ideals is equal $n(H)+1$, which is the number of integrally  closed ideals.

(iii)\implies (i): Since the number of integrally closed  ideals is finite, (iii) implies that the number of trace ideals  is finite, that is, $R$ is small.
\end{proof}

\section{Binary operations on the set of trace ideals}
In this section we assume that $R$ is a one-dimensional analytically irreducible local domain.
By Proposition~\ref{whogetsvaccinefirst},  the sum of trace ideals is a trace ideal. However this is not the case for products of trace ideals. For example, $C$ is a trace ideal, but $C^2$ can not be a trace ideal if $R\neq \overline{R}$ because, in that case $C^2\subsetneq C$,  which by Proposition~\ref{conductor} is not possible if $C^2$ is a trace ideal.

 Therefore, we define a binary operation $*$  on $\Cc_R$ which replaces the product. In the case that $R$ is Gorenstein it has the property that the $*$-product  of trace ideals is again a trace ideal.

\medskip
Let $I, J\in \Cc_R$. We set
$
I*J=(I^{-1}J^{-1})^{-1}.
$
\begin{Proposition}
\label{star}
Let $I,J\in\Cc_R $. Then
\begin{enumerate}
\item[(a)] $I*J\in \Cc_R$.
\item[(b)] $I*J\subseteq (I^{-1})^{-1}\sect (J^{-1})^{-1}$.
\end{enumerate}
\end{Proposition}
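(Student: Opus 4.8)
The plan is to handle (b) first, since it is essentially formal and its idea gets reused in (a). All the manipulations below take place among nonzero $R$-submodules of $Q(R)$, where products are associative and commutative and the operation $(-)^{-1}$ reverses inclusions; I will use these facts freely, together with $R^{-1}=R$ and the identity $\overline R\,\overline R=\overline R$ (as $\overline R$ is a ring).

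For (b): since $J\in\Cc_R$ gives $J\subseteq R$, I get $1\in J^{-1}$, hence $R\subseteq J^{-1}$ and therefore $I^{-1}=I^{-1}R\subseteq I^{-1}J^{-1}$. Applying $(-)^{-1}$ yields $I*J=(I^{-1}J^{-1})^{-1}\subseteq (I^{-1})^{-1}$, and the symmetric argument (using $R\subseteq I^{-1}$) gives $I*J\subseteq (J^{-1})^{-1}$; intersecting the two proves (b).

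For (a), the inclusion $I*J\subseteq R$ comes out the same way: from $I,J\subseteq R$ I get $R\subseteq I^{-1}$ and $R\subseteq J^{-1}$, so $R=R\cdot R\subseteq I^{-1}J^{-1}$, and applying $(-)^{-1}$ with $R^{-1}=R$ gives $I*J\subseteq R$. The substantive part is the inclusion $C\subseteq I*J$, i.e. $CI^{-1}J^{-1}\subseteq R$. The key step I would isolate is
\[
CI^{-1}\subseteq C.
\]
Here is how I would prove it: first, $CI^{-1}\subseteq R$, because $C\subseteq I$ and so $cb\in bI\subseteq R$ for every $c\in C$, $b\in I^{-1}$; hence $CI^{-1}$ is an ideal of $R$. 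Second, it is $\overline R$-stable, since $(CI^{-1})\overline R=(C\overline R)I^{-1}=CI^{-1}$ by Lemma~\ref{largest}. Then the maximality assertion of Lemma~\ref{largest}, applied to the ideal $CI^{-1}$, forces $CI^{-1}\subseteq C$. Finally, multiplying this inclusion by $J^{-1}$ gives $CI^{-1}J^{-1}\subseteq CJ^{-1}$, and $CJ^{-1}\subseteq R$ because $C\subseteq J$; hence $CI^{-1}J^{-1}\subseteq R$, that is, $C\subseteq (I^{-1}J^{-1})^{-1}=I*J$, which finishes (a).

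I do not expect a real obstacle: the only ingredient that is not bookkeeping is recognizing $CI^{-1}$ as an $\overline R$-stable ideal of $R$, which is exactly the configuration Lemma~\ref{largest} was set up to exploit. The one thing to stay careful about is which inclusions get reversed under $(-)^{-1}$, and making sure every submodule product in sight genuinely lives inside $R$ before invoking Lemma~\ref{largest}.
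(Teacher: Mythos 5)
Your proof is correct. Part (b) is essentially the paper's argument, just unpacked: the paper notes $I^{-1}J^{-1}\supseteq I^{-1}+J^{-1}$ and inverts once, using $(A+B)^{-1}=A^{-1}\sect B^{-1}$, whereas you establish the two containments $I*J\subseteq(I^{-1})^{-1}$ and $I*J\subseteq(J^{-1})^{-1}$ separately; same content. For part (a) your route to the lower bound $C\subseteq I*J$ is genuinely different in mechanism. The paper sandwiches: from $C\subseteq I,J\subseteq R$ it deduces $R\subseteq I^{-1},J^{-1}\subseteq\overline{R}$ (implicitly using $C^{-1}=\overline{R}$), hence $R\subseteq I^{-1}J^{-1}\subseteq\overline{R}$, and then inverts, using $\overline{R}^{-1}=R:\overline{R}=C$. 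You instead verify $C\cdot(I^{-1}J^{-1})\subseteq R$ directly, by showing $CI^{-1}$ is an $\overline{R}$-stable ideal of $R$ and invoking the maximality statement of Lemma~\ref{largest} to get $CI^{-1}\subseteq C$. Both arguments ultimately rest on $C\overline{R}=C$, but yours avoids the identities $C^{-1}=\overline{R}$ and $\overline{R}^{-1}=C$ (which the paper leaves implicit), at the cost of being slightly longer; the paper's version has the advantage of also recording the useful containment $I^{-1}J^{-1}\subseteq\overline{R}$, which is reused in spirit later (e.g.\ in Proposition~\ref{gorensteinisdifficult} and Section~3). No gaps: your verification that $CI^{-1}$ is an ideal of $R$ and that $(CI^{-1})\overline{R}=(C\overline{R})I^{-1}=CI^{-1}$ are both sound, and Lemma~\ref{largest} applies exactly as you use it.
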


\begin{proof}
(a)  Since $C\subseteq I, J\subseteq R$, it follows that $R\subseteq I^{-1}, J^{-1}\subseteq \overline{R}$. Therefore, $ R\subseteq I^{-1} J^{-1}\subseteq \overline{R}$, and hence $C\subseteq (I^{-1} J^{-1})^{-1}=I*J \subseteq R$.


(b) Since $I^{-1}J^{-1}\supseteq I^{-1}+J^{-1}$, it follows that $$I*J\subseteq (I^{-1}+J^{-1})^{-1}=(I^{-1})^{-1}\sect (J^{-1})^{-1}.$$
\end{proof}

In general,  the  intersection of trace ideals is not a trace ideal. But when $R$ is Gorenstein we will see that
 the $*$-product is the best   approximation of a trace ideal to the intersection. In the proof of the next results we use the fact that if $R$ is Gorenstein,  then the non-zero fractionary ideals are reflexive.

\begin{Proposition}
\label{bestapproximation}
Let $R$ be Gorenstein and let  $I,J\in \Tc_R $. Then
\begin{enumerate}
\item[(a)] $I*J\in \Tc_R$.
\item[(b)] $I*J=J*I$, $R*I=I*I=I$ and $I*C=C$.
\item[(c)] $I*J=\subtr(I\sect J)\subseteq I\sect J$.  In particular,  $I\sect J$ is a trace ideal if and only if $I\sect J=I*J$.
\end{enumerate}
\end{Proposition}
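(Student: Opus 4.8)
The whole argument rests on Proposition~\ref{gorensteinisdifficult}, which (for $R$ Gorenstein) identifies $\Tc_R$ with $\Oc_R$ via $I\mapsto I^{-1}$, together with the fact that over a Gorenstein ring every non-zero fractionary ideal is reflexive (Lemma~\ref{moneyback}(b)). The one extra ingredient I would record first is that $\Oc_R$ is closed under products of fractionary ideals: if $R_1,R_2\in\Oc_R$, then $R_1R_2$ (the set of finite sums $\sum_k a_kb_k$ with $a_k\in R_1$, $b_k\in R_2$) is a finitely generated, non-zero $R$-submodule of $\overline{R}$ which is closed under multiplication, since $\bigl(\sum_k a_kb_k\bigr)\bigl(\sum_l a'_lb'_l\bigr)=\sum_{k,l}(a_ka'_l)(b_kb'_l)$ with $a_ka'_l\in R_1$ and $b_kb'_l\in R_2$; as $1\in R_1\sect R_2$ it contains $R$, so $R_1R_2\in\Oc_R$.

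For part (a): since $I,J\in\Tc_R$, Proposition~\ref{gorensteinisdifficult} gives $I^{-1},J^{-1}\in\Oc_R$, hence $I^{-1}J^{-1}\in\Oc_R$ by the observation above. By the surjectivity of $\alpha$ there is $L\in\Tc_R$ with $L^{-1}=I^{-1}J^{-1}$; then $L=(L^{-1})^{-1}=(I^{-1}J^{-1})^{-1}=I*J$ by reflexivity, so $I*J\in\Tc_R$. For part (b): $I*J=J*I$ is immediate from commutativity of the product of fractionary ideals; $R^{-1}=R$ and $RI^{-1}=I^{-1}$ give $R*I=(I^{-1})^{-1}=I$; and $I^{-1}$ is a ring (Proposition~\ref{gorensteinisdifficult}), so $I^{-1}I^{-1}=I^{-1}$ and hence $I*I=(I^{-1})^{-1}=I$. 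For $I*C$ I would first note that $\overline{R}$ is a reflexive fractionary ideal, so from $C=R:\overline{R}=\overline{R}^{-1}$ one gets $C^{-1}=(\overline{R}^{-1})^{-1}=\overline{R}$; since $R\subseteq I^{-1}\subseteq\overline{R}$ and $\overline{R}$ is a ring, $I^{-1}\overline{R}=\overline{R}$, whence $I*C=(I^{-1}C^{-1})^{-1}=\overline{R}^{-1}=C$.

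For part (c): Proposition~\ref{star}(b) together with reflexivity gives $I*J\subseteq(I^{-1})^{-1}\sect(J^{-1})^{-1}=I\sect J$; since $I*J\in\Tc_R$ by (a) and $\subtr(I\sect J)$ is by definition the largest trace ideal contained in $I\sect J$ (Corollary~\ref{mytimeisover}), we get $I*J\subseteq\subtr(I\sect J)$. For the reverse inclusion, set $T=\subtr(I\sect J)\in\Tc_R$; from $T\subseteq I$ and $T\subseteq J$ we get $I^{-1}\subseteq T^{-1}$ and $J^{-1}\subseteq T^{-1}$, and since $T^{-1}$ is a ring (Proposition~\ref{gorensteinisdifficult}) it contains $I^{-1}J^{-1}$; applying $(-)^{-1}$ and using reflexivity, $T=(T^{-1})^{-1}\subseteq(I^{-1}J^{-1})^{-1}=I*J$. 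Hence $I*J=\subtr(I\sect J)$. The last assertion is then immediate: $I\sect J$ is a trace ideal if and only if it coincides with the largest trace ideal it contains, i.e.\ if and only if $I\sect J=\subtr(I\sect J)=I*J$.

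The only genuinely new point is the closure of $\Oc_R$ under products of fractionary ideals; everything else is a routine combination of Proposition~\ref{gorensteinisdifficult}, Proposition~\ref{star} and Corollary~\ref{mytimeisover}. The main thing to watch is that each application of $(X^{-1})^{-1}=X$ is made to a non-zero fractionary ideal, so that reflexivity over the Gorenstein ring (Lemma~\ref{moneyback}(b)) legitimately applies.
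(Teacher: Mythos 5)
Your proposal is correct and follows essentially the same route as the paper: part (a) and (c) both hinge on Proposition~\ref{gorensteinisdifficult} together with the observation that an overring $T^{-1}\in\Oc_R$ containing $I^{-1}$ and $J^{-1}$ must contain $I^{-1}J^{-1}$, and part (b) is the same computation with $C^{-1}=\overline{R}$. The only difference is cosmetic: you make explicit the closure of $\Oc_R$ under products and the non-vanishing needed for reflexivity, which the paper leaves implicit.
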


\begin{proof}
(a) By Proposition~\ref{gorensteinisdifficult},  $I^{-1}, J^{-1}\in\Oc_R$. Therefore, $I^{-1} J^{-1}\in\Oc_R$. Since $\alpha(I*J) =I^{-1}J^{-1}$,  Proposition~\ref{gorensteinisdifficult} implies that $I*J\in \Tc_R$.

(b) It is clear that $I*J=J*I$ and that $R*I=I*I=I$. By  Proposition~\ref{gorensteinisdifficult}, $I^{-1}$ is a ring. Therefore,  $I^{-1}I^{-1}=I^{-1}$. This shows that $I*I=I$. Finally,  $I*C=(I^{-1}C^{-1})^{-1}=(I^{-1}\overline{R})^{-1}=(\overline{R})^{-1}=C$.

(c) First note that  $I^{-1}+J^{-1}=(I^{-1}+J^{-1})^{-1})^{-1}= ((I^{-1})^{-1}\sect (J^{-1})^{-1})^{-1}=  (I\sect J)^{-1}$. Now let $L\in \Tc_R$ with $I*J\subseteq L\subseteq I\sect J$, then $I^{-1}+J^{-1}=( I\sect J)^{-1}\subseteq L^{-1}\subseteq I^{-1}J^{-1}$.
Since $L^{-1}$ is a ring, and since $I^{-1}\subseteq L^{-1}$ and $J^{-1}\subseteq L^{-1}$, it follows that $I^{-1}J^{-1}\subseteq L^{-1}$. Therefore, $I^{-1}J^{-1}=L^{-1}$ and hence $L=I*J$.
\end{proof}

The zero ideal $(0)$ is also a trace ideal. We set $(0)*I=I*(0)$.   The following result together with  Proposition~\ref{bestapproximation} implies  that if  $R=K[|H|]$  is a semigroup ring with $H$ a symmetric semigroup, then the set of monomial trace ideals of $R$  with the operations $*$ and $+$ has  the structure of a finite semiring.

\begin{Proposition}
\label{semiring}
Let $R$ be Gorenstein and let  $I,J, L\in \Tc_R $.
\begin{enumerate}
\item[(a)] $(I*J)*L=I*(J*L)$.
\item[(b)]  $J*I+L*I\subseteq (J+L)*I$. Equality holds, if $R$ is a semigroup ring and the ideals $I,J,L$ are monomial ideals in $R$.
\end{enumerate}
\end{Proposition}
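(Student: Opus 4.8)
For part (a), the plan is to translate associativity of $*$ into the associativity of the ordinary product of overrings. Applying $\alpha$ from Proposition~\ref{gorensteinisdifficult}, we have $\alpha(I*J)=I^{-1}J^{-1}$, and iterating, $\alpha((I*J)*L)=(I^{-1}J^{-1})^{-1}{}^{-1}L^{-1}$; but since $I^{-1}J^{-1}$ is already an overring (a product of overrings inside $\overline R$), it is reflexive by Lemma~\ref{moneyback}(b), so $(I^{-1}J^{-1})^{-1}{}^{-1}=I^{-1}J^{-1}$ and thus $\alpha((I*J)*L)=I^{-1}J^{-1}L^{-1}$. Symmetrically $\alpha(I*(J*L))=I^{-1}J^{-1}L^{-1}$. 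Since $\alpha$ is injective on $\Tc_R$ (again Proposition~\ref{gorensteinisdifficult}, using that all three $*$-products lie in $\Tc_R$ by Proposition~\ref{bestapproximation}(a)), we conclude $(I*J)*L=I*(J*L)$. I would also dispatch the degenerate case where one of the ideals is $(0)$ separately, directly from the convention $(0)*I=I*(0)$.

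For the inclusion in part (b), the plan is: $J^{-1},L^{-1}\subseteq (J+L)^{-1}$ is immediate since $J+L\supseteq J$ and $J+L\supseteq L$ (inverses reverse inclusions), hence $J^{-1}I^{-1}\subseteq (J+L)^{-1}I^{-1}$ and likewise for $L$; taking the sum and then inverses (which reverses again) gives $(J+L)*I=((J+L)^{-1}I^{-1})^{-1}\subseteq (J^{-1}I^{-1}+L^{-1}I^{-1})^{-1}$. Now $J^{-1}I^{-1}+L^{-1}I^{-1}=(J^{-1}+L^{-1})I^{-1}$, and one shows $(J^{-1}+L^{-1})I^{-1}\subseteq J^{-1}I^{-1}\cdot L^{-1}I^{-1}$ is \emph{not} what is needed; instead, observe that $J*I$ and $L*I$ are trace ideals whose sum $J*I+L*I$ is a trace ideal by Proposition~\ref{whogetsvaccinefirst}, with inverse $(J*I)^{-1}\sect (L*I)^{-1}=J^{-1}I^{-1}\sect L^{-1}I^{-1}$ (using reflexivity and that $\alpha$ sends sums to intersections, as in the proof of Proposition~\ref{bestapproximation}(c)). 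Since $J^{-1}I^{-1}+L^{-1}I^{-1}\subseteq J^{-1}I^{-1}\sect L^{-1}I^{-1}$ fails in general, the correct route is: $(J+L)*I$ has inverse $(J+L)^{-1}I^{-1}=(J^{-1}\sect L^{-1})I^{-1}$ (since $(J+L)^{-1}=J^{-1}\sect L^{-1}$ by reflexivity), while $J*I+L*I$ has inverse $J^{-1}I^{-1}\sect L^{-1}I^{-1}$, and one has $(J^{-1}\sect L^{-1})I^{-1}\subseteq J^{-1}I^{-1}\sect L^{-1}I^{-1}$; applying $\alpha^{-1}$ (inclusion-reversing) yields $J*I+L*I\subseteq (J+L)*I$.

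For the equality in the monomial semigroup-ring case, the plan is to argue valuation-combinatorially. Write $R=K[|H|]$ with $H$ symmetric; a monomial ideal corresponds to a subset of $\ZZ_{\ge 0}$ stable under addition by $H$, and for such ideals all the operations $^{-1}$, $+$, $\sect$, and multiplication are computed by the corresponding operations on value sets ($v(I^{-1})=\{n: n+v(I)\subseteq v(R)\}$, $v(I+J)=v(I)\cup v(J)$, $v(IJ)=v(I)+v(J)$, $v(I\sect J)=v(I)\cap v(J)$). The claimed equality $(J+L)*I=J*I+L*I$ then reduces to the set identity $\bigl((v(J)\cap v(L))^{c}+v(I)^{c}\bigr)^{c}=\bigl(v(J)^{c}+v(I)^{c}\bigr)^{c}\cup\bigl(v(L)^{c}+v(I)^{c}\bigr)^{c}$, where $S^{c}$ denotes the ``inverse'' set relative to $H$; equivalently, passing to inverses, $(v(J)^{c}+v(I)^{c})\cap(v(L)^{c}+v(I)^{c})=(v(J)^{c}\cap v(L)^{c})+v(I)^{c}$. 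The inclusion $\supseteq$ is the general one; for $\subseteq$ one uses that these inverse sets are ideals of $H$ in the opposite sense and exploits that $H$ is a numerical semigroup so that $v(I)^{c}$ is cofinite, reducing the equality to a finite check on the gap structure. The main obstacle is precisely this last step: verifying that for monomial ideals over a symmetric (numerical) semigroup the reverse inclusion holds, i.e.\ that distributivity of $+$ over $\sect$ on the relevant family of ``co-ideals'' is genuine — this is where symmetry of $H$ (Gorensteinness) is used essentially, via the duality $v(\omega_R)=c-1-v(R)$ type symmetry that makes inverses of monomial ideals behave like set complements.
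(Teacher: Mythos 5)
Part (a) of your proposal is correct and is essentially the paper's own computation: both arguments reduce $(I*J)*L$ and $I*(J*L)$ to $(I^{-1}J^{-1}L^{-1})^{-1}$ using that every fractionary ideal is reflexive when $R$ is Gorenstein (your appeal to injectivity of $\alpha$ is just this reflexivity again, and the observation that $I^{-1}J^{-1}$ is an overring is not needed for it to be reflexive). For the inclusion in (b), however, your opening assertion ``$J^{-1},L^{-1}\subseteq (J+L)^{-1}$'' is backwards: since $J\subseteq J+L$ and inversion reverses inclusions, one has $(J+L)^{-1}\subseteq J^{-1}\sect L^{-1}$ (indeed with equality), and the chain you first derive would land on an inclusion in the wrong direction. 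The ``correct route'' you eventually settle on --- comparing $((J+L)*I)^{-1}=(J^{-1}\sect L^{-1})I^{-1}$ with $(J*I+L*I)^{-1}=J^{-1}I^{-1}\sect L^{-1}I^{-1}$ and inverting --- is valid, but it is a long detour: the corrected version of your very first step already gives the statement, since $(J+L)^{-1}\subseteq J^{-1}$ yields $J*I=(J^{-1}I^{-1})^{-1}\subseteq((J+L)^{-1}I^{-1})^{-1}=(J+L)*I$, likewise for $L$, and one sums. The paper instead rewrites $I_1*I_2=I_1:I_2^{-1}$ (using reflexivity) and quotes $(J+L):I^{-1}\supseteq J:I^{-1}+L:I^{-1}$; either way the inclusion is immediate.

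The genuine gap is in the equality clause for monomial ideals over a semigroup ring, and you flag it yourself. Your translation into value sets is the right framework, and the reduction of the claim to the identity $\bigl(v(J)^{c}+v(I)^{c}\bigr)\sect\bigl(v(L)^{c}+v(I)^{c}\bigr)=\bigl(v(J)^{c}\sect v(L)^{c}\bigr)+v(I)^{c}$ is correct (granting reflexivity and that all operations are computed on value sets for monomial ideals). But the nontrivial inclusion of that identity \emph{is} the assertion to be proved, and your proposal stops exactly there, offering only the unexecuted plan of ``a finite check on the gap structure'' together with a vague appeal to the symmetry $v(\omega_R)=c-1-v(R)$; neither constitutes an argument, and it is not even clear that symmetry of $H$ is the relevant mechanism rather than the fact that $v(I^{-1})$, $v(J^{-1})$, $v(L^{-1})$ are numerical semigroups containing $H$. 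To be fair, the paper itself states this equality without any justification, so your attempt is not worse off than the source; but as a proof it is incomplete at precisely this point, and that incompleteness should be stated as such rather than presented as a routine verification.
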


\begin{proof}
(a) $(I*J)*L= (I^{-1} J^{-1})^{-1}*L=(((I^{-1} J^{-1})^{-1})^{-1}L^{-1})^{-1}=(I^{-1} J^{-1}L^{-1})^{-1}$. For the last equation we used that  $R$ is Gorenstein, which implies that $(A^{-1})^{-1}=A$ for any fractionary ideal $A$ of $R$. In the same we get $I*(J*L)=(I^{-1} J^{-1}L^{-1})^{-1}$.

(b) Let $I_1,I_2 \in\Cc_R $. Then $$I_1*I_2=(I_1^{-1}I_2^{-1})^{-1}= (I_1^{-1})^{-1}:I_2^{-1}=I_1:I_2^{-1}.$$
\[
(J+L)*I=(J+L):I^{-1}\supseteq J:I^{-1}+L: I^{-1}= I*J+ I*L.
\]
Equality holds, if $R$ is a semigroup ring and the ideals $I,J,L$ are monomial ideals in $R$.
\end{proof}

\medskip
\noindent
{\bf Acknowlegment}: The authors would like to thank Shinya Kumashiro for several useful comments, and especially for his comments regarding Proposition~\ref{gorensteindec22} and Theorem~\ref{ProfessorKunzlikestohelpsus}, and also want to thank the anonymous referee for his careful reading of the manuscript and pointing out some errors in the previous versions.

\end{document}